\newcommand*\fullref[3][\relax]{%
  \ifdefined\hyperref%
    {\hyperref[#3]{#2\penalty 200\ \ref*{#3}#1}}%
  \else%
    {#2\penalty 200\ \relax\ref{#3}#1}%
  \fi%
}
\tikzset{
  bst/.style={
    standard/.style={
      font=\small,
      draw=gray,
      rounded rectangle,
      minimum width=4.5mm,
      minimum height=4.5mm,
      inner xsep=0mm,
      inner ysep=1mm,
      outer sep=0mm,
      line width=.5pt,
    },
    empty/.style={
      minimum width=3mm,
      minimum height=3mm,
    },
    triangle/.style={
      isosceles triangle,
      isosceles triangle apex angle=60,
      shape border rotate=90,
      rounded corners=2mm,
      minimum width=8mm,
      inner xsep=0mm,
      inner ysep=.5mm
    },
    blank/.style={
      draw=none,
    },
    nodecount/.style={
      blank,
      font=\scriptsize,
    },
    every node/.style={standard},
    every child/.style={draw=black,line width=.6pt},
    level distance=10mm,
    level 1/.style={sibling distance=60mm},
    level 2/.style={sibling distance=30mm},
    level 3/.style={sibling distance=15mm},
  },
  medbst/.style={
    bst,
    level distance=10mm,
    level 1/.style={sibling distance=15mm},
    level 2/.style={sibling distance=15mm},
    level 3/.style={sibling distance=15mm},
  },
  smallbst/.style={
    bst,
    level distance=8mm,
    level 1/.style={sibling distance=10mm},
    level 2/.style={sibling distance=10mm},
    level 3/.style={sibling distance=10mm},
  },
  tinybst/.style={
    bst,
    level distance=5mm,
    level 1/.style={sibling distance=8mm},
    level 2/.style={sibling distance=8mm},
    level 3/.style={sibling distance=8mm},
    every node/.append style={
      font=\footnotesize,
    },
    triangle/.append style={
      rounded corners=1mm,
      minimum width=7mm,
      inner xsep=-.5mm,
    },
  },
  microbst/.style={
    bst,
    standard/.append style={
      font=\scriptsize,
      minimum width=3mm,
      minimum height=3mm,
      inner ysep=.25mm,
    },
    level distance=3mm,
    level 1/.style={sibling distance=6mm},
    level 2/.style={sibling distance=6mm},
    level 3/.style={sibling distance=6mm},
  },
  nanobst/.style={
    bst,
    standard/.append style={
      font=\tiny,
      minimum width=2mm,
      minimum height=2mm,
      inner ysep=.25mm,
    },
    level distance=2mm,
    level 1/.style={sibling distance=4mm},
    level 2/.style={sibling distance=4mm},
    level 3/.style={sibling distance=4mm},
  },
}
\tikzset{
  pretableaumatrix/.style={
    ampersand replacement=\&,
    matrix of math nodes,
    outer sep=1mm,
    inner sep=0mm,
    anchor=center,
    row sep={between borders,-\pgflinewidth},
    column sep={between borders,-\pgflinewidth},
    dottedentry/.style={densely dotted},
    spaceentry/.style={draw=none,execute at begin node=\null},
  },
  pretableaunode/.style={
    font=\small,
    draw=gray,
    sharp corners,
    rectangle,
    anchor=base,
    text height=3.75mm,
    text depth=1.25mm,
    minimum height=5mm,
    minimum width=5mm,
    inner sep=0mm,
    outer sep=0mm,
  },
  tableaumatrix/.style={
    pretableaumatrix,
    every node/.append style={
      pretableaunode,
    },
  },
  medtableaumatrix/.style={
    pretableaumatrix,
    every node/.append style={
      pretableaunode,
      font=\footnotesize,
      text height=2.75mm,
      text depth=.75mm,
      minimum height=3.5mm,
      minimum width=3.5mm
    },
  },
  smalltableaumatrix/.style={
    pretableaumatrix,
    every node/.append style={
      pretableaunode,
      font=\scriptsize,
      text height=1.85mm,
      text depth=.15mm,
      minimum height=2.5mm,
      minimum width=2.5mm,
    },
  },
  tinytableaumatrix/.style={
    pretableaumatrix,
    every node/.append style={
      pretableaunode,
      font=\tiny,
      text height=1.25mm,
      text depth=.15mm,
      minimum height=1.75mm,
      minimum width=1.75mm
    },
  },
  tableau/.style={
    baseline=-1.25mm,
    every matrix/.style={tableaumatrix},
  },
  medtableau/.style={
    baseline=-1.25mm,
    every matrix/.style={medtableaumatrix},
  },
  smalltableau/.style={
    baseline=-1.25mm,
    every matrix/.style={smalltableaumatrix},
  },
  preshapetableaumatrix/.style={
    pretableaumatrix,
    execute at end cell={\strut},
    every node/.append style={
      draw=black,
      anchor=base,
      inner sep=0mm,
      outer sep=0mm,
    },
    shadedentry/.style={fill=gray},
    darkshadedentry/.style={fill=darkgray},
  },
  medshapetableaumatrix/.style={
    preshapetableaumatrix,
    every node/.append style={
      text height=2.75mm,
      text depth=.75mm,
      minimum height=3.5mm,
      minimum width=3.5mm
    },
  },
  shapetableaumatrix/.style={
    ampersand replacement=\&,
    matrix of math nodes,
    outer sep=0mm,
    inner sep=0mm,
    anchor=base,
    row sep={between borders,-\pgflinewidth},
    column sep={between borders,-\pgflinewidth},
    execute at begin cell={\strut},
    every node/.append style={draw,anchor=base,text height=1mm,text depth=.5mm,minimum size=1.5mm,inner sep=0mm,outer sep=0mm},
  },
  shapetableau/.style={
    every matrix/.style={shapetableaumatrix},
  },
  topalign/.style={
    every matrix/.append style={name=maintableau,anchor=maintableau-1-1.base},
    baseline,
  },
}
\newcommand*\tableau[2][]{\tikz[tableau,#1]\matrix{#2};}
\theoremstyle{definition}
\newtheorem{definition}{Definition}[section]
\newtheorem{algorithm}[definition]{Algorithm}
\newtheorem{conjecture}[definition]{Conjecture}
\theoremstyle{plain}
\newtheorem{lemma}[definition]{Lemma}
\newtheorem{proposition}[definition]{Proposition}
\numberwithin{equation}{section}
\newcommand*{\textparens}[1]{\textup{(}#1\textup{)}}
\newcommand*{\defterm}[1]{\emph{#1}}
\newcommand\chyph{\penalty\@M-\hskip\z@skip}
\DeclarePairedDelimiter{\parens}{\lparen}{\rparen}
\DeclarePairedDelimiter{\set}{\{}{\}}
\DeclarePairedDelimiterX{\gset}[2]{\{}{\}}{\,#1:#2\,}
\newcommand*{\biggg}{\bBigg@{4}}
\newcommand*{\Biggg}{\bBigg@{5}}
\newcommand*{\sizeddelimiter}[2]{\bBigg@{#1}#2}
\newcommand*{\emptyword}{\varepsilon}
\DeclarePairedDelimiterX{\pres}[2]{\langle}{\rangle}{#1\,\delimsize\vert\,\mathopen{}#2}
\newcommand*{\aA}{\mathcal{A}}
\newcommand*{\evlit}{{\mathrm{ev}}}
\newcommand*{\ev}[2][]{\evlit\parens[#1]{#2}}
\newcommand*{\plac}{{\mathsf{plac}}}
\newcommand*{\hypo}{{\mathsf{hypo}}}
\newcommand*{\sylv}{{\mathsf{sylv}}}
\newcommand*{\sylvsharp}{{{\mathsf{sylv}}\smash{{}^{\mathsf{\#}}}}}
\newcommand*{\baxt}{{\mathsf{baxt}}}
\newcommand*{\stal}{{\mathsf{stal}}}
\newcommand*{\taig}{{\mathsf{taig}}}
\newcommand*{\lps}{{\mathsf{lPS}}}
\newcommand*{\rps}{{\mathsf{rPS}}}
\newcommand*{\plit}{\mathrm{P}}
\newcommand*{\pplac}[2][]{\plit_{\plac}\parens[#1]{#2}}
\newcommand*{\phypo}[2][]{\plit_{\hypo}\parens[#1]{#2}}
\newcommand*{\psylv}[2][]{\plit_{\sylv}\parens[#1]{#2}}
\newcommand*{\psylvsharp}[2][]{\plit_{\sylvsharp}\parens[#1]{#2}}
\newcommand*{\pbaxt}[2][]{\plit_{\baxt}\parens[#1]{#2}}
\newcommand*{\ptaig}[2][]{\plit_{\taig}\parens[#1]{#2}}
\newcommand*{\pstal}[2][]{\plit_{\stal}\parens[#1]{#2}}
\newcommand*{\plps}[2][]{\plit_{\lps}\parens[#1]{#2}}
\newcommand*{\prps}[2][]{\plit_{\rps}\parens[#1]{#2}}
\begin{document}

\title[Identities in plactic, hypoplactic, and related monoids]{Identities in plactic, hypoplactic, sylvester, Baxter, and related monoids}

\author{Alan J. Cain}
\address{%
Centro de Matem\'{a}tica e Aplica\c{c}\~{o}es\\
Faculdade de Ci\^{e}ncias e Tecnologia\\
Universidade Nova de Lisboa\\
2829--516 Caparica\\
Portugal
}
\email{%
a.cain@fct.unl.pt
}
\thanks{The first author was supported by an Investigador {\sc FCT} fellowship ({\scshape IF}/01622/2013/{\scshape CP}1161/{\scshape
    CT}0001). For both authors, this work was partially supported by the
Funda\c{c}\~{a}o para
a Ci\^{e}ncia e a Tecnologia (Portuguese Foundation for Science and Technology) through the project {\scshape UID}/{\scshape
  MAT}/00297/2013 (Centro de Matem\'{a}tica e Aplica\c{c}\~{o}es), and the project {\scshape PTDC}/{\scshape MHC-FIL}/2583/2014.}

\author{Ant\'onio Malheiro}
\address{%
Departamento de Matem\'{a}tica \&\ Centro de Matem\'{a}tica e Aplica\c{c}\~{o}es\\
Faculdade de Ci\^{e}ncias e Tecnologia\\
Universidade Nova de Lisboa\\
2829--516 Caparica\\
Portugal
}
\email{%
ajm@fct.unl.pt
}

\begin{abstract}
  This paper considers whether non-trivial identities are satisfied by certain `plactic-like' monoids that, like the
  plactic monoid, are closely connected to combinatorics. New results show that the hypoplactic, sylvester, Baxter,
  stalactic, and taiga monoids satisfy identities, and indeed give shortest identities satisfied by these monoids. The
  existing state of knowledge is discussed for the plactic monoid and left and right patience sorting monoids.
\end{abstract}

\maketitle

\section{Introduction}

The ubiquitous plactic monoid, whose elements can be viewed as semistandard Young tableaux, and which appears in such
diverse contexts as symmetric functions \cite{macdonald_symmetric}, representation theory \cite{fulton_young}, algebraic
combinatorics \cite{lothaire_algebraic}, Kostka--Foulkes polynomials \cite{lascoux_plaxique,lascoux_foulkes}, Schubert
polynomials \cite{lascoux_schubert,lascoux_tableaux}, and musical theory \cite{jedrzejewski_plactic}, is one of a family
of `plactic-like' monoids that are closely connected with combinatorics. These monoids include the hypoplactic monoid
\cite{krob_noncommutative4,novelli_hypoplactic}, the sylvester monoid \cite{hivert_sylvester}, the taiga monoid
\cite{priez_lattice}, the stalactic monoid \cite{hivert_commutative,priez_lattice}, the Baxter monoid
\cite{giraudo_baxter2}, and the left and right patience sorting monoids \cite{rey_algebraic,cms_patience1}. Each of these
monoids is obtained by factoring the free monoid $\aA^*$ over the infinite ordered alphabet
$\aA = \set{1 < 2 < 3 <\ldots}$ by a congruence that arises from a so-called insertion algorithm that computes a
combinatorial object from a word. For instance, for the plactic monoid, the corresponding combinatorial objects are
(semistandard) Young tableaux; for the sylvester monoid, they are binary search trees.

\afterpage{%
\begin{landscape}
\begin{table}[ht]
  \centering
  \caption{Examples of non-trivial identities satisfied by `plactic-like' monoids. The stated identities are always
    shortest non-trivial identities satisfied by the corresponding monoid, but there may be other identities of the same
    length.}
  \label{tbl:summary}
  \begin{tabular}{llr@{\;}c@{\;}lll}
    \toprule
    \textit{Monoid}        & \textit{Symbol} &          & \clap{\textit{Identity satisfied}} &          & \emph{Discussed in}                    & \emph{Original result}     \\
    \midrule
    Plactic                & $\plac$         &          & None                               &          & \fullref{Subsec.}{subsec:placiden}     & \cite{ckkmo_placticidentity}    \\
    Hypoplactic            & $\hypo$         & $xyxy$   & $=$                                & $yxyx$   & \fullref{Subsec.}{subsec:hypoiden}     & Present paper       \\
    Sylvester              & $\sylv$         & $xyxy$   & $=$                                & $yxxy$   & \fullref{Subsec.}{subsec:sylviden}     & Present paper       \\
    $\#$-sylvester         & $\sylvsharp$    & $yxyx$   & $=$                                & $yxxy$   & \fullref{Subsec.}{subsec:sylviden}     & Present paper       \\
    Baxter                 & $\baxt$         & $yxxyxy$ & $=$                                & $yxyxxy$ & \fullref{Subsec.}{subsec:baxtiden}     & Present paper       \\
    Stalactic              & $\stal$         & $xyx$    & $=$                                & $yxx$    & \fullref{Subsec.}{subsec:staltaigiden} & Present paper       \\
    Taiga                  & $\taig$         & $xyx$    & $=$                                & $yxx$    & \fullref{Subsec.}{subsec:staltaigiden} & Present paper       \\
    Left patience sorting  & $\lps$          &          & \clap{None}                        &          & \fullref{Subsec.}{subsec:lpsrpsiden}   & \cite{cms_patience1} \\
    Right patience sorting & $\rps$          &          & \clap{None}                        &          & \fullref{Subsec.}{subsec:lpsrpsiden}   & \cite{cms_patience1} \\
    \bottomrule
  \end{tabular}
\end{table}
\end{landscape}
}

An \defterm{identity} is a formal equality between two words in the free monoid, and is \defterm{non-trivial} if the two
words are distinct. A monoid $M$ \defterm{satisfies} such an identity if the equality in $M$ holds under every
substitution of letters in the words by elements of $M$. For example, any commutative monoid satisfies the non-trivial
identity $xy = yx$. A finitely generated group has polynomial growth if and only if it is virtually nilpotent
\cite{gromov_growth}, and a virtually nilpotent group satisfies a non-trivial identity
\cite{malcev_nilpotent,neumann_nilpotent}. Thus it is natural to ask whether every finitely generated semigroup or
monoid with polynomial growth satisfies a non-trivial identity. Schneerson \cite{shneerson_identities} provided the
first counterexample, but it remains to be seen whether there is a `natural' finitely generated semigroup with
polynomial growth that does not satisfy a non-trivial identity. It is easy to see that the finite-rank analogues of the plactic
monoid and the other related monoids discussed above have polynomial growth. This naturally leads to the question of
whether these monoids satisfy non-trivial identities, for if any of them failed to do so, it would certainly be a very
natural example of a polynomial-growth monoid that does not satisfy a non-trivial identity.

Further motivation for this question comes from a result of Jaszu\'{n}ska \& Okni\'{n}ski, who proved that the Chinese
monoid \cite{cassaigne_chinese}, which has the same growth type as the plactic monoid \cite{duchamp_placticgrowth} but
which does not arise from such a natural combinatorial object, satisfies Adian's identity $xyyxxyxyyx = xyyxyxxyyx$
\cite[Corollary~3.3.4]{jaszunska_chinese}. (This is the shortest non-trivial identity satisfied by the bicyclic monoid
\cite[Chapter IV, Theorem 2]{adian_defining}).

The goal of this paper is to present new results showing that some of these monoids satisfy non-trivial identities, and
to survey the state of knowledge for other monoids in this family. New results show that the hypoplactic, sylvester,
baxter, stalactic, and taiga monoids satisfy non-trivial identities. A discussion of the situation for the left and right patience sorting monoids and
plactic monoids completes the paper. \fullref{Table}{tbl:summary} summarizes the results.

\section{`Plactic-like' monoids}

In this section, we recall only the definition and essential facts about the various monoids; for further background,
see \cite[Ch.~5]{lothaire_algebraic} on the plactic monoid, \cite{novelli_hypoplactic} on the hypoplactic monoid,
\cite{hivert_sylvester} on the sylvester monoid; \cite[\S~5]{priez_lattice} on the taiga monoid; \cite{priez_lattice} on
the stalactic monoid; \cite{giraudo_baxter2} on the Baxter monoid; and \cite{cms_patience1} on the patience sorting monoids.

\subsection{Alphabets and words}

For any alphabet $X$, the free monoid (that is, the set of all words, including the empty word) on the
alphabet $X$ is denoted $X^*$. The empty word is denoted $\emptyword$. For any $u \in X^*$, the length of $u$ is denoted
$|u|$, and, for any $x \in X$, the number of times the symbol $x$ appears in $u$ is denoted $|u|_x$.

Throughout the paper, $\aA = \set{1<2<3<\ldots}$ is the set of natural numbers viewed as an infinite ordered alphabet,
and $\aA_n = \set{1<2<\ldots<n}$ is set of the first $n$ natural numbers viewed as a finite ordered alphabet.

\subsection{Combinatorial objects and insertion algorithms}

A \defterm{Young tableau} is a finite array of symbols from $\aA$, with rows non-decreasing from left to right and columns
strictly increasing from top to bottom, with shorter rows below longer ones, and with rows left-justified. An
example of a Young tableau is
\begin{equation}
\label{eq:tableaueg}
\tableau{
1 \& 1 \& 1 \& 2 \& 5 \\
3 \& 3 \& 5 \& 6 \\
6 \\
}.
\end{equation}

The following algorithm takes a Young tableau and a symbol from $\aA$ and yields a new Young tableau:

\begin{algorithm}[Schensted's algorithm]
\label{alg:placinsertone}
\textit{Input:} A Young tableau $T$ and a symbol $a \in \aA$.
\begin{enumerate}

\item If $a$ is greater than or equal to every entry in the topmost row of $T$, add $a$ as an entry at the rightmost end of
  $T$ and output the resulting tableau.

\item Otherwise, let $z$ be the leftmost entry in the top row of $T$ that is strictly greater than $a$. Replace $z$ by
  $a$ in the topmost row and recursively insert $z$ into the tableau formed by the rows of $T$ below the topmost. (Note
  that the recursion may end with an insertion into an `empty row' below the existing rows of $T$.)

\end{enumerate}
\end{algorithm}

A \defterm{quasi-ribbon tableau} is a finite array of symbols from $\aA$,
with rows non-decreasing from left to right and columns strictly increasing from top to bottom, that does not contain
any $2\times 2$ subarray (that is, of the form $\tikz[shapetableau]\matrix{ \null \& \null \\ \null \& \null \\};$). An
example of a quasi-ribbon tableau is:
\begin{equation}
\label{eq:qrteg}
\tableau
{
 1 \& 1 \& 1 \& 2 \&   \&   \&   \&   \\
   \&   \&   \& 3 \& 3 \& 5 \& 5 \&   \\
   \&   \&   \&   \&   \&   \& 6 \& 6 \\
}.
\end{equation}
Notice that the same symbol cannot appear in two different rows of a quasi-ribbon tableau. There is also an insertion algorithm for quasi-ribon tableau:

\begin{algorithm}[{\cite[Algorithm~4.4]{novelli_hypoplactic}}]
\label{alg:hypoinsertone}
\textit{Input:} A quasi-ribbon tableau $T$ and a symbol $a \in \aA$.

If there is no entry in $T$ that is less than or equal to $a$, output the tableau obtained by putting
$a$ and gluing $T$ by its top-leftmost entry to the bottom of $a$.

Otherwise, let $x$ be the right-most and bottom-most entry of $T$ that is less than or equal to $a$. Put a new entry $a$
to the right of $x$ and glue the remaining part of $T$ (below and to the right of $x$) onto the bottom of the new entry
$a$. Output the new tableau.
\end{algorithm}

A \defterm{right strict binary search tree} is a labelled rooted binary tree where the label of each node is greater
than or equal to the label of every node in its left subtree, and strictly less than every node in its right subtree. An
example of a binary search tree is
\begin{equation}
\label{eq:bsteg}
\begin{tikzpicture}[tinybst,baseline=-7.5mm]
  \node (root) {$5$}
    child[sibling distance=16mm] { node (0) {$2$}
      child { node (00) {$1$}
        child { node (000) {$1$}
          child { node (0000) {$1$} }
          child[missing]
        }
        child[missing]
      }
      child { node (01) {$5$}
        child { node (010) {$3$}
          child { node (0100) {$3$} }
          child[missing]
        }
        child[missing]
      }
    }
    child[sibling distance=16mm] { node (1) {$6$}
      child { node (10) {$6$} }
      child[missing]
    };
\end{tikzpicture}.
\end{equation}

The insertion algorithm for right strict binary search trees adds the new symbol as a leaf node in the unique place that maintains
the property of being a right strict binary search tree:

\begin{algorithm}[{\cite[\S~3.3]{hivert_sylvester}}]
\label{alg:sylvinsertone}
\textit{Input:} A right strict binary search tree $T$ and a symbol $a \in \aA$.

If $T$ is empty, create a node and label it $a$. If $T$ is non-empty, examine the label $x$ of the root
node; if $a \geq x$, recursively insert $a$ into the right subtree of the root node; otherwise recursively insert $a$
into the left subtree of the root note. Output the resulting tree.
\end{algorithm}

A \defterm{left strict binary search tree} is a labelled rooted binary tree where the label of each node is strictly
greater than the label of every node in its left subtree, and less than or equal to every node in its right subtree; see
the left tree shown in \eqref{eq:twinbsteg} below for an example.

The insertion algorithm for left strict binary search trees is dual to \fullref{Algorithm}{alg:sylvinsertone} above:

\begin{algorithm}
\label{alg:sylvsharpinsertone}
\textit{Input:} A left strict binary search tree $T$ and a symbol $a \in \aA$.

If $T$ is empty, create a node and label it $a$. If $T$ is non-empty, examine the label $x$ of the root
node; if $a \leq x$, recursively insert $a$ into the left subtree of the root node; otherwise recursively insert $a$
into the right subtree of the root note. Output the resulting tree.
\end{algorithm}

The \defterm{canopy} of a (labelled or unlabelled) binary tree $T$ is the word over $\set{0,1}$ obtained by traversing
the empty subtrees of the nodes of $T$ from left to right, except the first and the last, labelling an empty left
subtree by $1$ and an empty right subtree by $0$. (See \eqref{eq:twinbsteg} below for examples of canopies.)

A \defterm{pair of twin binary search trees} consist of a left strict binary search tree $T_L$ and a right strict binary
search tree $T_R$, such that $T_L$ and $T_R$ contain the same symbols, and the canopies of $T_L$ and $T_R$ are
complementary, in the sense that the $i$-th symbol of the canopy of $T_L$ is $0$ (respectively $1$) if and only if the
$i$-th symbol of the canopy of $T_R$ is $1$ (respectively $0$). The following is an example of a pair of twin binary
search trees, with the complementary canopies $0110101$ and $1001010$ shown in grey:
\begin{equation}
  \label{eq:twinbsteg}
  \left(
  \begin{tikzpicture}[baseline=-20mm]
    \begin{scope}[
      smallbst,
      ]
      \node (root) {$3$}
      child[sibling distance=25mm] { node (0) {$1$}
        child[missing]
        child { node (01) {$1$}
          child[missing]
          child { node (011) {$1$}
            child[missing]
            child { node (0111) {$2$} }
          }
        }
      }
      child[sibling distance=25mm] { node (1) {$6$}
        child { node (10) {$3$}
          child[missing]
          child { node (101) {$5$}
            child[missing]
            child { node (1011) {$5$} }
          }
        }
        child { node (11) {$6$} } };
    \end{scope}
    \begin{scope}[every node/.style={gray,font=\scriptsize}]
      \node at ($ (01) + (-2mm,-3.5mm) $) {$1$};
      \node at ($ (011) + (-2mm,-3.5mm) $) {$1$};
      \node at ($ (0111) + (-2mm,-3.5mm) $) {$1$};
      \node at ($ (0111) + (2mm,-3.5mm) $) {$0$};
      \node at ($ (10) + (-2mm,-3.5mm) $) {$1$};
      \node at ($ (101) + (-2mm,-3.5mm) $) {$1$};
      \node at ($ (1011) + (-2mm,-3.5mm) $) {$1$};
      \node at ($ (1011) + (2mm,-3.5mm) $) {$0$};
      \node at ($ (11) + (-2mm,-3.5mm) $) {$1$};
    \end{scope}
  \end{tikzpicture}
  \;\;
  ,
  \;
  \begin{tikzpicture}[baseline=-20mm]
    \begin{scope}[
      smallbst,
      ]
      \node (root) {$5$}
      child[sibling distance=25mm] { node (0) {$2$}
        child { node (00) {$1$}
          child { node (000) {$1$}
            child { node (0000) {$1$} }
            child[missing]
          }
          child[missing]
        }
        child { node (01) {$5$}
          child { node (010) {$3$}
            child { node (0100) {$3$} }
            child[missing]
          }
          child[missing]
        }
      }
      child[sibling distance=25mm] { node (1) {$6$}
        child { node (10) {$6$} }
        child[missing]
      };
    \end{scope}
    \begin{scope}[every node/.style={gray,font=\scriptsize}]
      \node at ($ (0000) + (2mm,-3.5mm) $) {$0$};
      \node at ($ (000) + (2mm,-3.5mm) $) {$0$};
      \node at ($ (00) + (2mm,-3.5mm) $) {$0$};
      \node at ($ (0100) + (-2mm,-3.5mm) $) {$1$};
      \node at ($ (0100) + (2mm,-3.5mm) $) {$0$};
      \node at ($ (010) + (2mm,-3.5mm) $) {$0$};
      \node at ($ (01) + (2mm,-3.5mm) $) {$0$};
      \node at ($ (10) + (-2mm,-3.5mm) $) {$1$};
      \node at ($ (10) + (2mm,-3.5mm) $) {$0$};
    \end{scope}
  \end{tikzpicture}
  \right).
\end{equation}

A \defterm{stalactic tableau} is a finite array of symbols of $\aA$ in which columns are top-aligned, and two symbols
appear in the same column if and only if they are equal. For example,
\begin{equation}
\label{eq:staltabeg}
\tikz[tableau]\matrix{
3 \& 1 \& 2 \& 6 \& 5 \\
3 \& 1 \&   \& 6 \& 5 \\
  \& 1 \&   \&   \&   \\
};
\end{equation}
is a stalactic tableau. The insertion algorithm is very straightforward:

\begin{algorithm}[{\cite[\S~3.7]{hivert_commutative}}]
\label{alg:stalinsertone}
\textit{Input:} A stalactic tableau $T$ and a symbol $a \in \aA$.

If $a$ does not appear in $T$, add $a$ to the left of the top row of $T$. If $a$ does appear in $T$,
add $a$ to the bottom of the (by definition, unique) column in which $a$ appears. Output the new tableau.
\end{algorithm}

A \defterm{binary search tree with multiplicities} is a labelled binary search tree in which each label appears at most
once, and where a non-negative integer called the \defterm{multiplicity} is assigned to each node label. An example of a
binary search tree is:
\begin{equation}
\label{eq:bstmulteg}
\begin{tikzpicture}[tinybst,baseline=(0)]
  \node (root) {$5^2$}
    child { node (0) {$2^1$}
      child { node (01) {$1^3$} }
      child { node (01) {$3^2$} }
    }
    child { node (1) {$6^2$} };
\end{tikzpicture}.
\end{equation}
(The superscripts on the labels in each node denote the multiplicities.)

\begin{algorithm}(\cite[Algorithm~3]{priez_lattice})
\label{alg:taiginsertone}
\textit{Input:} A binary search tree with multiplicities $T$ and a symbol $a \in \aA$.

If $T$ is empty, create a node, label it by $a$, and assign it multiplicity $1$. If $T$ is non-empty,
examine the label $x$ of the root node; if $a < x$, recursively insert $a$ into the left subtree of the root node; if $a
> x$, recursively insert $a$ into the right subtree of the root note; if $a=x$, increment by $1$ the multiplicity of the node
label $x$.
\end{algorithm}

An \defterm{lPS tableau} is a finite array of symbols of $\aA$ in which columns are top-aligned, the entries in the
top row are non-decreasing from left to right, and the entries in each column are strictly increasing from top to bottom. For example,
\begin{equation}
\label{eq:lpstabeg}
\tikz[tableau]\matrix{
1 \& 1 \& 1 \& 2 \& 5 \\
3 \& 3 \& 5 \&   \& 6 \\
  \& 6 \\
};
\end{equation}
is an lPS tableau. The insertion algorithm is very straightforward:

\begin{algorithm}[{\cite[\S~3.2]{thomas_longest}}]
\label{alg:lpsinsertone}
\textit{Input:} An lPS tableau $T$ and a symbol $a \in \aA$.

If $a$ is greater than or equal to every symbol that appears in the top row of $T$, add $a$ to the right of the top
row of $T$. Otherwise, let $C$ be the leftmost column whose topmost symbol is strictly greater than $a$. Slide column
$C$ down by one space and add $a$ as a new entry on top of $C$. Output the new tableau.
\end{algorithm}

An \defterm{rPS tableau} is a finite array of symbols of $\aA$ in which columns are top-aligned, the entries in the top
row are increasing from left to right, and the entries in each column are non-decreasing from top to bottom. For
example,
\begin{equation}
\label{eq:rpstabeg}
\tikz[tableau]\matrix{
1 \& 2 \& 5 \& 6 \\
1 \& 3 \& 5 \\
1 \& 6 \\
3  \\
};
\end{equation}
is an rPS tableau. Again, the insertion algorithm is very straightforward:

\begin{algorithm}[{\cite[\S~3.2]{thomas_longest}}]
\label{alg:rpsinsertone}
\textit{Input:} An rPS tableau $T$ and a symbol $a \in \aA$.

If $a$ is greater than every symbol that appears in the top row of $T$, add $a$ to the right of the top
row of $T$. Otherwise, let $C$ be the leftmost column whose topmost symbol is greater than or equal to $a$. Slide column
$C$ down by one space and add $a$ as a new entry on top of $C$. Output the new tableau.
\end{algorithm}

\subsection{Monoids from insertion}

Let $\mathsf{M} \in \set{\plac,\hypo,\sylv,\sylvsharp,\baxt,\stal,\allowbreak\taig,\lps,\rps}$ and $u \in \aA^*$. Using the insertion algorithms
described above, one can compute from $u$ a combinatorial object $\plit_{\mathsf{M}}(u)$ of the type associated to
$\mathsf{M}$ in \fullref{Table}{tbl:objects}. If $\mathsf{M} \neq \baxt$, one computes $\plit_{\mathsf{M}}(u)$ by
starting with the empty combinatorial object and inserting the symbols of $u$ one-by-one using the appropriate insertion
algorithm and proceeding through the word $u$ either left-to-right or right-to-left as shown in the table. For
$\mathsf{M} = \baxt$, one uses a slightly different procedure: $\pbaxt{u}$ is the pair of twin binary search trees
$\parens[\big]{\psylvsharp{u},\psylv{u}}$.

\afterpage{%
  \clearpage\upshape
\begin{landscape}
\begin{table}[ht]
  \centering
  \caption{Insertion algorithms used to compute combinatorial objects, and the corresponding monoids. `L-to-R' and
    `R-to-L' abbreviate `left-to-right' and `right-to-left', respectively. The algorithm used to compute $\pbaxt{\cdot}$
    is a special case and is discussed in the main text.}
  \label{tbl:objects}
  \begin{tabular}{llllll}
    \toprule
    \textit{Monoid}        & \textit{Symbol} & \textit{Combinatorial object}   & \textit{Algorithm}           & \textit{Direction} & \textit{Example}                               \\
    \midrule
    Plactic                & $\plac$         & Young tableau                   & \ref{alg:placinsertone}      & L-to-R             & $\pplac{3613151265}$ is $\eqref{eq:tableaueg}$  \\
    Hypoplactic            & $\hypo$         & Quasi-ribbon tableau            & \ref{alg:hypoinsertone}      & L-to-R             & $\phypo{3613151265}$ is $\eqref{eq:qrteg}$      \\
    Sylvester              & $\sylv$         & Right strict binary search tree & \ref{alg:sylvinsertone}      & R-to-L             & $\psylv{3613151265}$ is $\eqref{eq:bsteg}$     \\
    $\#$-sylvester         & $\sylvsharp$    & Left strict binary search tree  & \ref{alg:sylvsharpinsertone} & L-to-R             & $\psylv{3613151265}$ is $\eqref{eq:bsteg}$     \\
    Baxter                 & $\baxt$         & Pair of twin BSTs               & ---                          & ---                & $\pbaxt{3613151265}$ is $\eqref{eq:twinbsteg}$   \\
    Stalactic              & $\stal$         & Stalactic tableau               & \ref{alg:stalinsertone}      & R-to-L             & $\pstal{3613151265}$ is $\eqref{eq:staltabeg}$ \\
    Taiga                  & $\taig$         & BST with multiplicities         & \ref{alg:taiginsertone}      & R-to-L             & $\ptaig{3613151265}$ is $\eqref{eq:bstmulteg}$  \\
    Left patience sorting  & $\lps$          & lPS tableau                     & \ref{alg:lpsinsertone}       & L-to-R             & $\plps{3613151265}$ is $\eqref{eq:lpstabeg}$    \\
    Right patience sorting & $\rps$          & rPS tableau                     & \ref{alg:rpsinsertone}       & L-to-R             & $\prps{3613151265}$ is $\eqref{eq:rpstabeg}$    \\
    \bottomrule
  \end{tabular}
\end{table}
\end{landscape}
}

For each $\mathsf{M} \in \set{\plac,\hypo,\sylv,\sylvsharp,\baxt,\stal,\taig,\lps,\rps}$, define the relation $\equiv_{\mathsf{M}}$ by
\[
u \equiv_{\mathsf{M}} v \iff \plit_{\mathsf{M}}(u) = \plit_{\mathsf{M}}(v).
\]
In each case, the relation $\equiv_{\mathsf{M}}$ is a congruence on $\aA^*$, and so the factor monoid
$\mathsf{M} = \aA^*\!/{\equiv_{\mathsf{M}}}$ can be formed, and is named as in \fullref{Table}{tbl:objects}. The
rank-$n$ analogue is the factor monoid $\mathsf{M}_n = \aA_n^*/{\equiv_{\mathsf{M}}}$, where the relation
  $\equiv_{\mathsf{M}}$ is naturally restricted to $\aA_n^* \times \aA_n^*$.

It follows from the definition of $\equiv_{\mathsf{M}}$ (for any
$\mathsf{M} \in \set{\plac,\hypo,\sylv,\sylvsharp,\allowbreak\baxt,\stal,\taig,\lps,\rps}$) that each element $[u]_{\equiv_{\mathsf{M}}}$ of
the factor monoid $\mathsf{M}$ can be identified with the combinatorial object $\plit_{\mathrm{M}}(u)$.

The \defterm{evaluation} (also called the \defterm{content}) of a word $u \in \aA^*$, denoted $\ev{u}$, is the infinite
tuple of non-negative integers, indexed by $\aA$, whose $a$-th element is $|u|_a$; thus this tuple describes the number
of each symbol in $\aA$ that appears in $u$. It is immediate from the definition of the monoids above that if
$u \equiv_{\mathsf{M}} v$, then $\ev{u} = \ev{v}$, and hence it makes sense to define the evaluation of an element $p$
of one of these monoids to be the evaluation of any word representing it.

Note that $\mathsf{M}_n$ is a submonoid of $\mathsf{M}$ for all $n$, and that $\mathsf{M}_m$ is a submonoid of
$\mathsf{M}_n$ for all $m \leq n$. In each case, $\mathsf{M}_1$ is a free monogenic monoid and thus commutative, but
that $\mathsf{M}_n$ is non-commutative for $n \geq 2$ (and thus $\mathsf{M}$ is also non-commutative).

\section{Identities}

\fullref{Subsections}{subsec:hypoiden} to \ref{subsec:baxtiden} find shortest non-trivial identities satisfied by
$\hypo$, $\stal$, $\taig$, $\sylv$, $\sylvsharp$, and $\baxt$, in that order. \fullref{Subsection}{subsec:lpsrpsiden}
discusses the situation for $\lps$ and $\rps$, and \fullref{Subsection}{subsec:placiden} discusses the current state of
knowledge for $\plac$.

\subsection{Preliminaries}

An \defterm{identity} over an alphabet $X$ is a formal equality $u = v$, where $u$ and $v$ are words in the free monoid
$X^*$, and is non-trivial if $u$ and $v$ are not equal. The elements of $X$ are called \defterm{variables}. A monoid $M$
\defterm{satisfies} the identity $u = v$ if, for every homomorphism $\phi : X^* \to M$, the equality $\phi(u) = \phi(v)$
holds in $M$. It is often useful to think of this in the informal way mentioned in the introduction: $M$ satisfies the
identity $u = v$ if equality in $M$ holds under every substitution of variables in the words $u$ and $v$ by elements of
$M$. Note that if a monoid satisfies an identity $u=v$, all of its submonoids and all of its homomorphic images satisfy
$u=v$.

\begin{lemma}
  \label{lem:shortest}
  Let $M$ be a monoid satisfying a non-trivial identity. Then the shortest \textparens{in terms of the sums of the
    lengths of the two sides} non-trivial identity satisfied by $M$ is an identity over an alphabet with at most two
  variables.
\end{lemma}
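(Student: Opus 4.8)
The plan is to start from any shortest non-trivial identity and push it down to at most two variables by a length-preserving substitution, using the elementary fact that identities are closed under substitution. Concretely, the key observation I would record first is this: if $M$ satisfies $u = v$ with $u,v \in X^*$, and $\psi \colon X^* \to Y^*$ is any monoid homomorphism, then $M$ satisfies $\psi(u) = \psi(v)$ over $Y$. This is immediate, since for every homomorphism $\theta \colon Y^* \to M$ the composite $\theta \circ \psi \colon X^* \to M$ is itself a homomorphism, so $\theta(\psi(u)) = \theta(\psi(v))$ by hypothesis. When $\psi$ is induced by a map sending each variable to a single variable, it preserves the length of each side, so it carries a shortest identity to an identity of the same total length, \emph{provided} the result is still non-trivial.

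With this in hand, let $u = v$ be a shortest non-trivial identity satisfied by $M$, say over the alphabet $X$, and write $L = |u| + |v|$ for its (minimal) total length. I would split into two cases according to whether the two sides have equal length. If $|u| \neq |v|$, collapse every variable to a single letter $x$ via $\psi$; then $\psi(u) = x^{|u|}$ and $\psi(v) = x^{|v|}$ are distinct words, so $x^{|u|} = x^{|v|}$ is a non-trivial identity of total length $L$ over one variable. If instead $|u| = |v| = n$, write $u = a_1 \cdots a_n$ and $v = b_1 \cdots b_n$ and let $j$ be the least index with $a_j \neq b_j$ (which exists since $u \neq v$). Define $\psi$ on variables by sending $a_j$ to $x$ and every other variable to $y$. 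For $i < j$ we have $a_i = b_i$, so the images agree in those positions, while at position $j$ we get $\psi(a_j) = x \neq y = \psi(b_j)$, because $b_j \neq a_j$. Hence $\psi(u) \neq \psi(v)$, and $\psi(u) = \psi(v)$ is a non-trivial identity of total length $L$ over the two variables $x, y$.

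In either case the resulting identity is satisfied by $M$, is non-trivial, has the minimal total length $L$, and uses at most two variables, which gives the claim. The only real subtlety — and the step I would be most careful about — is guaranteeing that non-triviality survives the collapse; this is exactly why the argument branches on whether $|u| = |v|$. When the two sides have different lengths a one-letter projection already separates them, while when they have the same length one must retain at least two letters and aim the substitution at a position where $u$ and $v$ genuinely differ. The commutativity identity $xy = yx$, which becomes trivial under any single-variable substitution, shows that two variables are genuinely needed in general, so the bound cannot be improved.
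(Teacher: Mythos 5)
Your argument is correct, but it takes a genuinely different route from the paper's. The paper substitutes the identity element $e_M$ of $M$ for all but one or two of the variables, which \emph{deletes} letters and therefore produces a \emph{strictly shorter} non-trivial identity in at most two variables from any identity involving three or more; the lemma then follows because a shortest identity in three or more variables would contradict its own minimality. You instead post-compose with a variable-collapsing homomorphism $X^* \to \set{x,y}^*$, which preserves the length of each side, and you guard non-triviality by branching on whether $|u| = |v|$ and, in the equal-length case, aiming the collapse at the first position where the two sides differ --- that is exactly the right place to be careful, and your handling of it is sound. The trade-off is worth noting: the paper's deletion buys the slightly stronger conclusion that \emph{no} shortest identity can involve three or more variables, since any such identity yields a strictly shorter one, whereas your collapse only shows that \emph{some} identity of the minimal total length uses at most two variables (from $xyz = xzy$ you would produce a two-variable identity of the same length, while deletion gives the strictly shorter $yz = zy$). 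For the way the lemma is actually used in the paper --- reducing the search for shortest identities to two-variable ones of equal side lengths --- your version is entirely sufficient, and your case split on $|u| = |v|$ versus $|u| \neq |v|$ is arguably cleaner than the paper's prefix/non-prefix dichotomy.
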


\begin{proof}
  It is sufficient to show that, for any non-trivial identity over an alphabet with at most three variables, there is a
  strictly shorter identity with at most two variables. So suppose that $u = v$ is a non-trivial identity over $X$
  satisfied by $M$, where $|X| \geq 3$. Interchanging $u$ and $v$ if necessary, assume $|u| \leq |v|$. Let $e_M$ be the
  identity (that is, the multiplicative neutral element) of $M$.

  First consider the case where $u$ is a prefix of $v$. By non-triviality, $u$ is strictly shorter than $v$, so that
  $v = uw$ for some non-empty word $w$. Let $x \in X$ be a variable that appears in $w$. So $x^{|u|_x} = x^{|v|_x}$ is a
  non-trivial identity. To see that it is satisfied by $M$, proceed as follows: Let $\phi' : \set{x}^* \to M$ be a
  homomorphism. Extend $\phi'$ to a homomorphism $\phi : X \to M$ by defining $\phi(z) = e_M$ for all
  $z \in X \setminus \set{x}$. Note that $\phi'(x^{|u|_x}) = \phi(u)$ and $\phi'(x^{|v|_x}) = \phi(v)$ by the definition
  of $\phi'$, and that $\phi'(u) = \phi'(v)$ since $M$ satisfies $u = v$. Combining these equaltities shows that
  $\phi(x^{|u|_x}) = \phi(x^{|v|_x})$. Hence $M$ satisfies the non-trivial identity $x^{|u|_x} = x^{|v|_x}$ over the
  alphabet $\set{x}$.

  Now suppose $u$ is not a prefix of $v$. Suppose $u = u_1\cdots u_{|u|}$ and $v = v_1\cdots v_{|v|}$. Let $i$ be
  minimal such that the variables $u_i$ and $v_i$ are different; suppose these variables are $x$ and $y$. Let $u'$ and
  $v'$ be obtained from $u$ and $v$ respectively by deleting all variables other than $x$ and $y$. By the choice of $i$,
  $u' = v'$ is a non-trivial identity. To see that it is satisfied by $M$,
proceed as follows: Let
  $\phi' : \set{x,y}^* \to M$ be a homomorphism. Extend $\phi'$ to a homomorphism $\phi : X^* \to M$ by defining
  $\phi(z) = e_M$ for all $z \in X \setminus \set{x,y}$. Note that $\phi(u) = \phi'(u')$ and $\phi(v) = \phi'(v')$ by
  the definitions of the words $u'$ and $v'$ and the homomorphism $\phi$; note also that $\phi(u) = \phi(v)$ since $M$
  satisfies the identity $u = v$.  Combining these equalities shows that
$\phi'(u')= \phi'(v')$. So $M$ satisfies the
  non-trivial identity $u' = v'$ over the alphabet $\set{x,y}$.

  Note that in both cases the resulting identity is strictly shorter than
the original one.
\end{proof}

\begin{lemma}
  \label{lem:lengths}
  Let $M$ be a monoid that contains a free monogenic submonoid and suppose that $M$ satisfies an identity $u = v$ over
  an alphabet $X$. Then $|u| = |v|$ and $|u|_x = |v|_x$ for all $x \in X$. Consequently, if $u = v$ is non-trivial, then
  $|X| \geq 2$.
\end{lemma}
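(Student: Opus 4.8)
The plan is to use the free monogenic submonoid to embed a copy of $(\nset,+)$ into $M$ and then read off the total and per-variable letter counts from suitable substitutions. Fix an element $t \in M$ generating a free monogenic submonoid; the defining property I will use is that $t$ has infinite order, i.e. $t^i = t^j$ implies $i = j$.

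First I would prove $|u| = |v|$. Take the homomorphism $\phi : X^* \to M$ sending every variable to $t$. Since $\phi$ maps any word of length $\ell$ to $t^\ell$, we get $\phi(u) = t^{|u|}$ and $\phi(v) = t^{|v|}$. Because $M$ satisfies $u = v$, these are equal, so $t^{|u|} = t^{|v|}$, and infinite order of $t$ forces $|u| = |v|$.

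Next I would refine this to the individual counts. Fix $x \in X$ and define $\psi : X^* \to M$ by $\psi(x) = t$ and $\psi(z) = e_M$ for all $z \in X \setminus \set{x}$. Every occurrence of $x$ then contributes a factor $t$ while every other letter contributes $e_M$, so $\psi(u) = t^{|u|_x}$ and $\psi(v) = t^{|v|_x}$. As $M$ satisfies $u = v$ we obtain $t^{|u|_x} = t^{|v|_x}$, whence $|u|_x = |v|_x$; this holds for each $x \in X$. For the final consequence, suppose $|X| \leq 1$. There are no non-trivial identities when $|X| = 0$, and if $X = \set{x}$ then a word over $X$ is completely determined by $|\cdot|_x$, so $|u|_x = |v|_x$ gives $u = v$ as words, contradicting non-triviality; hence $|X| \geq 2$.

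There is no real obstacle here: the entire argument rests on the single observation that belonging to a free monogenic submonoid is exactly the hypothesis that $t$ has infinite order, which is the cancellation $t^i = t^j \Rightarrow i = j$ invoked in both steps. Everything else is routine evaluation of $\phi$ and $\psi$ on the two sides of the identity, so the only point meriting care is stating this infinite-order property explicitly before using it.
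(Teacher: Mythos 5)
Your proof is correct and uses essentially the same argument as the paper: the substitution sending $x$ to a free generator and all other variables to $e_M$ is exactly the paper's $\phi_x$, from which $|u|_x=|v|_x$ follows. The only differences are cosmetic: the paper obtains $|u|=|v|$ by summing the counts $|u|_x=|v|_x$ over $x\in X$ rather than by a separate all-variables-to-$t$ substitution, and you spell out the (routine) $|X|\geq 2$ consequence that the paper leaves implicit.
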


\begin{proof}
  Let $a \in M$ generate a free monogenic submonoid of $M$ and let $e_M$ be the identity of $M$.  Let $x \in X$. Define
  \[
    \phi_x : X^* \to M,\qquad x \mapsto a,\quad z \mapsto e_M \text{ for all $z \in X \setminus \set{x}$.}
  \]
  Then $a^{|u|_x} = \phi_x(u) = \phi_x(v) = a^{|v|_x}$ since $M$ satisfies $u = v$. Since $a$ generates a free
  monogenic submonoid, $|u|_x = |v|_x$. Since this holds for all $x \in X$, it follows that $|u| = |v|$.
\end{proof}

The \defterm{length} of an identity $u = v$ with $|u| = |v|$ is defined to be $|u| = |v|$; note that this applies to any
identity satisfied by a monoid that contains a free monogenic submonoid by \fullref{Lemma}{lem:lengths}.

Two identities $u = v$ and $u' = v'$ are \defterm{equivalent} if one can be obtained from the other by possibly renaming
variables and swapping the two sides of the equality. For example, $xyxy = yxxy$ and $xyyx = yxyx$ are equivalent, since
the second can be obtained from the first by interchanging the variables $x$ and $y$ and swapping the two sides.

\begin{lemma}
  \label{lem:lengthtwothree}
  \begin{enumerate}
  \item Up to equivalence, the only length-$2$ non-trivial identity that can be satisfied by a monoid that contains a free monogenic
    submonoid is $xy = yx$; thus any homogeneous monoid satisfying a length-$2$ identity is commutative.
  \item Up to equivalence, the only length-$3$ non-trivial identities over a two-letter alphabet that can be satisfied
    by a monoid that contains a free monogenic submonoid are
    \[
      xxy =xyx,\qquad xxy = yxx,\qquad xyx = yxx.
    \]
  \end{enumerate}
\end{lemma}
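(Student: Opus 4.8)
The plan is to reduce both statements to a finite enumeration governed by \fullref{Lemma}{lem:lengths}. Since $M$ contains a free monogenic submonoid, that lemma tells us that any identity $u = v$ satisfied by $M$ is content-preserving: $|u|_x = |v|_x$ for every variable $x$, and in particular $|u| = |v|$. Moreover, for a non-trivial identity the lemma forces at least two distinct variables to occur. This pins down, for each prescribed length, exactly which words may appear on either side, after which I would simply list the distinct unordered pairs of such words and collapse the list under the equivalence relation of renaming variables and swapping sides.

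For part (1), a length-$2$ identity has $|u| = |v| = 2$. Writing the content as $(|u|_x, |u|_y)$ with $|u|_x + |u|_y = 2$, non-triviality together with \fullref{Lemma}{lem:lengths} rules out the one-variable contents $(2,0)$ and $(0,2)$ (these force both sides to be $xx$ or both to be $yy$, hence trivial), leaving $|u|_x = |u|_y = 1$. The only words of this content are $xy$ and $yx$, so the unique non-trivial identity is $xy = yx$. The remark about homogeneous monoids is then immediate, since a homogeneous monoid contains a free monogenic submonoid and $xy = yx$ is precisely the commutativity identity, so by the first assertion any length-$2$ identity it satisfies forces commutativity.

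For part (2), a length-$3$ identity over $\set{x,y}$ has content $(|u|_x, |u|_y)$ summing to $3$. The contents $(3,0)$ and $(0,3)$ again yield only trivial identities, so the content is $(2,1)$ or $(1,2)$. Swapping the names $x$ and $y$ carries the $(1,2)$ case to the $(2,1)$ case, so up to equivalence it suffices to treat content $(2,1)$. The three words with two $x$'s and one $y$ are $xxy$, $xyx$, and $yxx$, and the three unordered pairs of distinct such words yield exactly $xxy = xyx$, $xxy = yxx$, and $xyx = yxx$, as claimed.

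The argument involves no genuine obstacle; the only point needing care is the bookkeeping at the end of part (2), where I must confirm both that the three listed identities are pairwise inequivalent and that the content-$(1,2)$ identities contribute nothing new. For the former, I would observe that on a two-letter alphabet the only non-trivial renaming is the transposition of $x$ and $y$, which sends content-$(2,1)$ words to content-$(1,2)$ words; hence no renaming relates two distinct content-$(2,1)$ identities, and swapping sides merely reorders a pair, so the three are indeed inequivalent. For the latter, that same transposition carries the three content-$(1,2)$ identities bijectively onto the three content-$(2,1)$ identities, so they are already accounted for, and the list is both irredundant and complete.
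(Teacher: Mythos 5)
Your proof is correct and rests on the same foundation as the paper's: both reduce the problem to a finite check via Lemma~\ref{lem:lengths} (content preservation), the only difference being that you organize the enumeration by the content vector $(|u|_x,|u|_y)$ while the paper organizes part (2) by the set of positions at which $u$ and $v$ agree. Your added verification that the three length-$3$ identities are pairwise inequivalent and that the content-$(1,2)$ case contributes nothing new is sound bookkeeping that the paper leaves implicit.
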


\begin{proof}
  \begin{enumerate}
  \item Let $u_1u_2 = v_1v_2$ be a non-trivial identity satisfied  by some
monoid that contains a free monogenic submonoid. Suppose $u_1$ and $v_1$ are
the same variable. Then \fullref{Lemma}{lem:lengths}, implies that $u_2$ and
    $v_2$ are also the same variable, contradicting non-triviality. Thus $u_1$ and $v_1$ are different variables. Then
    \fullref{Lemma}{lem:lengths}, implies that $u_2$ is the same variable as $v_1$ and $v_2$ is the same variable as
    $u_1$. Thus, up to equivalence, the identity is $xy = yx$.
  \item Let $u_1u_2u_3 = v_1v_2v_3$ (where $u_i,v_i \in \set{x,y}$) be a non-trivial identity satisfied by some
    homogeneous monoid. Let $I$ be the set of indices $i \in \set{1,2,3}$ where the variables $u_i$ and $v_i$ are the
    same. Clearly, non-triviality shows that $|I| < 3$. If $|I| = 2$, then, interchanging $x$ and $y$ if necessary,
    there is a unique $i \in \set{1,2,3}$ where $u_i$ is $x$ and $v_i$ is $y$.
Then $|u|_x = 1 + |v|_x$, since the words
    $u$ and $v$ differ only at the $i$-th symbol; this contradicts \fullref{Lemma}{lem:lengths}. If $|I| = 0$, then
    every symbol of $u$ differs from every symbol of $v$. Since $|u| = 3$, at least one of $|u|_x$ and $|u|_y$ is at
    least $2$. Interchanging $x$ and $y$ if necessary, assume $|u|_x \geq 2$. Then $|v|_x = |u|_y \leq 1$, which is a
    contadiction.

    The only remaining possibility is $|I| = 1$. Interchanging $x$ and $y$ if necessary, assume that $u_i$ and $v_i$ are
    both $x$ for exactly one $i \in {1,2,3}$. For each of the three possibilities, the conditions $|u|_x = |v|_x$ and
    $|u|_y = |v|_y$ require the other symbols in $u$ must be $x$ and $y$, with the corresponding symbols in $v$ being the
    opposite. Each possibility gives one of the given identities. \qedhere
\end{enumerate}
\end{proof}

Let $\mathsf{M} \in \set{\plac,\hypo,\sylv,\taig,\stal,\baxt,\lps,\rps}$. Then
$\mathsf{M}$ contains the free monogenic
submonoid $\mathsf{M}_1$.
(Note that $\mathsf{M}_1$ thus satisfies the non-trivial identity $xy = yx$.) By
\fullref{Lemma}{lem:lengths}, if $\mathsf{M}$ satisfies a non-trivial identity, it must be over an alphabet with at
least two variables. The aim is to find shortest non-trivial identities satisfies by each monoid $\mathsf{M}$:
\fullref{Lemmata}{lem:shortest} and \ref{lem:lengths} show that it will suffice to consider identities $u = v$ over
$\set{x,y}$ with $|u| = |v|$. Note that since none of the monoids $\mathsf{M}$ is commutative, any identity must have
length at least $3$.

\subsection{Hypoplactic monoid}
\label{subsec:hypoiden}

The authors proved the following result using a quasi-crystal structure for the hypoplactic monoid
\cite[Theorem~9.3]{cm_hypoplactic}; this subsection presents a direct proof.

\begin{proposition}
  \label{prop:hypoiden}
  The hypoplactic monoid satisfies the following non-trivial identities
  \begin{align*}
    xyxy = xyyx &= yxxy = yxyx; \\
    xxyx &= xyxx.
  \end{align*}
  Furthermore, up to equivalence, these are the shortest non-trivial identities satisfied by the hypoplactic monoid.
\end{proposition}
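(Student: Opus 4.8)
The plan is to reason directly about the quasi-ribbon tableaux computed by \fullref{Algorithm}{alg:hypoinsertone}. By \fullref{Lemma}{lem:shortest} and \fullref{Lemma}{lem:lengths} it suffices to treat identities $u=v$ over $\set{x,y}$ with $\abs{u}=\abs{v}$, and since $\hypo$ is non-commutative any such identity has length at least $3$. The engine of the whole argument is a combinatorial description of $\hypocong$, which I would obtain by induction on \fullref{Algorithm}{alg:hypoinsertone}: reading a quasi-ribbon tableau along its ribbon from top left to bottom right always recovers the multiset of entries in non-decreasing order, so the tableau of a word $w$ is determined by $\ev{w}$ together with, for each pair $c<c'$ of \emph{consecutive} values in the support of $w$, one bit recording whether the ribbon descends between the run of $c$'s and the run of $c'$'s. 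I expect this bit to be \emph{down} exactly when some $c'$ precedes some $c$ in $w$ (equivalently, the leftmost $c'$ lies left of the rightmost $c$). Granting this, $u \hypocong v$ if and only if $\ev{u}=\ev{v}$ and, for every consecutive pair $c<c'$ in the common support, $c'$ precedes $c$ in $u$ precisely when $c'$ precedes $c$ in $v$.

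With this description the positive direction becomes bookkeeping. Fix images $s,t\in\aA^*$ for $x,y$. The four words $stst$, $stts$, $tsst$, $tsts$ share the evaluation $2\ev{s}+2\ev{t}$; moreover each contains two runs of $s$ and two runs of $t$ and exhibits both an $s$-run before a $t$-run and a $t$-run before an $s$-run. Hence for any consecutive pair $c<c'$ in the support — whether $c,c'$ lie in $s$, in $t$, or one in each — one always finds $c'$ before $c$, so all four tableaux have every bit equal to down and the words are $\hypocong$-related. For the images $ssts$ and $stss$ of $xxyx$ and $xyxx$ (evaluation $3\ev{s}+\ev{t}$) the same analysis gives the bit down for every consecutive pair except those in which $c$ and $c'$ both occur only inside $t$; for such a pair the bit equals the truth value of \emph{$c'$ precedes $c$ within $t$} for both words. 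Thus the two tableaux again coincide, and both displayed families of identities hold.

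For minimality and completeness I would argue as follows. A length-$2$ identity is excluded because $\hypo$ is non-commutative (\fullref{Lemma}{lem:lengthtwothree}(1)). By \fullref{Lemma}{lem:lengthtwothree}(2) the only length-$3$ candidates over $\set{x,y}$ are $xxy=xyx$, $xxy=yxx$ and $xyx=yxx$; the substitution $x\mapsto1$, $y\mapsto2$ refutes the first two (the $(1,2)$-bit of $112$ differs from that of $121$ and of $211$), and $x\mapsto2$, $y\mapsto1$ refutes the third (compare $212$ with $122$). So the shortest identities have length $4$. To see the list is complete up to equivalence, note that by \fullref{Lemma}{lem:lengths} the two sides of an identity have a common content, which — both variables occurring — is $(3,1)$, $(2,2)$ or $(1,3)$, the last being the image of the first under $x\leftrightarrow y$. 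In content $(2,2)$ the words $xxyy$ and $yyxx$ are separated from the four mixed words by $x\mapsto1,y\mapsto2$ and $x\mapsto2,y\mapsto1$, so the only identities are those among $xyxy,xyyx,yxxy,yxyx$; in content $(3,1)$ the extreme words $xxxy$ and $yxxx$ are separated from $xxyx,xyxx$ by a single-letter substitution, leaving only $xxyx=xyxx$.

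The main obstacle is the characterization of $\hypocong$ stated in the first paragraph: one must verify that \fullref{Algorithm}{alg:hypoinsertone} converts the relation \emph{some $c'$ precedes some $c$} into exactly the down-steps of the ribbon, and in particular that no information beyond consecutive-value inversions survives (so that, for example, words differing only at a non-consecutive pair still produce the same tableau). Once this is secured, both the positive direction and the finite length-$4$ classification are purely mechanical.
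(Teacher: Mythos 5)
Your proposal is correct and follows essentially the same route as the paper: the key tool in both is the characterization of $\hypocong$ by evaluation together with, for each pair of consecutive values in the support, the single bit recording whether some larger value precedes some smaller one (the paper states this as ``$a_i$ and $a_{i+1}$ lie on the same row iff $w$ has no $a_i$ to the right of an $a_{i+1}$'', asserted from \fullref{Algorithm}{alg:hypoinsertone} without a detailed induction, just as you propose), and the positive direction and the length-$3$ refutations are the same bookkeeping and the same substitutions. Your completeness argument for length $4$ is organized by enumerating words of each content and separating the non-equivalent ones by the substitutions $x\mapsto 1,y\mapsto 2$ and $x\mapsto 2,y\mapsto 1$, whereas the paper phrases it via ``$u$ contains a factor $xy$ iff $v$ does''; these are the same argument in different clothing, and the ``main obstacle'' you flag is treated at exactly the same level of detail in the paper itself.
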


\begin{proof}
  The first goal is to show that these identities are satisfied by $\hypo$. Let
$s,t \in \hypo$, and let
  $p,q \in \aA^*$ be words representing $s,t$, respectively. Let $B = \set{a_1
< \ldots < a_k}$ be the set of symbols in
  $\aA$ that appear in at least one of $p$ and $q$.

  By \fullref{Algorithm}{alg:hypoinsertone}, symbols $a_{i+1}$ and $a_i$ are on the same row of $\phypo{w}$
  (for any $w \in B^*$) if and only if the word $w$ does not contain a symbol $a_i$ somewhere to the right of a symbol
  $a_{i+1}$ (since inserting this symbol $a_i$ results in the part of the quasi-ribbon tableau that contains $a_{i+1}$
  being glued \emph{below} the entry $a_i$).

  Suppose $pqpq$ contains a symbol $a_i$ somewhere to the right of a symbol $a_{i+1}$. Then, regardless of whether these
  symbols both lie in $u$, both lie in $v$, or one lies in $u$ and the other lies in $v$, the word $pqqp$ also contains
  a symbol $a_i$ to the right of a symbol $a_{i+1}$. Similar reasoning establishes that if any of the words $pqpq$,
  $qppq$, $qpqp$, $pqqp$, $ppqp$, or $pqpp$, contains a symbol $a_i$ somewhere to the right of a symbol $a_{i+1}$, so do
  all the others. Hence either the symbols $a_i$ and $a_{i+1}$ are on the same row in all of $\phypo{pqpq}$,
  $\phypo{pqqp}$, $\phypo{qppq}$, $\phypo{qpqp}$, $\phypo{ppqp}$, and $\phypo{pqpp}$, or on different rows in all of them.

  A quasi-ribbon tableau is clearly determined by its evaluation and by knowledge of whether adjacent different entries
  are on the same row. Thus, noting that $\ev{pqpq} = \ev{qppq} = \ev{qpqp} = \ev{pqqp}$, it follows from the previous
  paragraph that
  \begin{multline*}
    stst = \phypo{pqpq} = tsst = \phypo{qppq} \\ = tsts = \phypo{qpqp} = stts = \phypo{pqqp}.
  \end{multline*}
  Similarly, noting that $\ev{ppqp} = \ev{pqpp}$, it follows that
  \[
    ssts = \phypo{ppqp} = \phypo{pqpp} = stss.
  \]

  The next goal is to show that, up to equivalence, these are the only length-$4$ non-trivial identities satisfied by
  $\hypo$.  Let $u = v$ be a length-$4$ non-trivial identity over $\set{x,y}$ satisfied by $\hypo$. By
  \fullref{Lemma}{lem:lengths}, $|u|_x = |v|_x$ and $|u|_y = |v|_y$.

  By \fullref{Algorithm}{alg:hypoinsertone}, symbols $1$ and $2$ are on different rows of $\phypo{w}$ (for any
  $w \in \set{1,2}^*$) if and only if the word $w$ contain a symbol $1$ somewhere to the right of a symbol $2$. Suppose
  $u$ contains a factor $xy$. Let $\phi : \set{x,y}^* \to \hypo$ map $x$ to $2$ and $y$ to $1$; then $\phypo{\phi(u)}$
  has $1$ and $2$ on different rows. The same must hold for $\phypo{\phi(v)}$, and thus $v$ must contain a factor
  $xy$. The converse is similar, and the reasoning is parallel for factors $yx$. Thus $u$ contains a factor $xy$
  (respectively, $yx$) if and only if $v$ contains a factor $xy$ (respectively, $yx$).

  If both $u$ and $v$ contain only factors $xy$, then $u = x^{|u|_x}y^{|u|_y} = x^{|v|_x}y^{|v|_y} = v$, which
  contradicts non-triviality. Similarly, it is impossible for $u$ and $v$ to contain only factors $yx$. So both $u$ and
  $v$ contain factors $xy$ and $yx$.

  Interchanging $x$ and $y$ if necessary, assume $|u|_x = |v|_x \geq |u|_y = |v|_y$. First consider the case where
  $|u|_x = |v|_x = 3$ and $|u|_y = |v|_y = 1$. Since $u$ and $v$ both contain
$xy$ and $yx$, this implies that
  $u,v \in \set{xxyx,xyxx}$. Now consider the case $|u|_x = |v|_x = 2$ and
$|u|_y = |v|_y = 2$. Again, since $u$ and $v$
  both contain $xy$ and $yx$, this implies that $u,v \in
\set{xyxy,xyyx,yxxy,yxyx}$. Thus $u = v$ is equivalent to one
  of the identities in the statement.

  The final goal is to prove that no length-$3$ non-trivial identity is
satisfied by $\hypo$. Consider the length-$3$
  identities from \fullref[(2)]{Lemma}{lem:lengthtwothree}:
  \[
    xxy =xyx,\qquad xxy = yxx,\qquad xyx = yxx.
  \]
  If one puts $x = 1$ and $y = 2$ into the first two identities and $x = 2$ and $y = 1$ in the third, one sees that
  $\hypo$ satisfies none of them:
  \begin{align*}
    \phypo{112} = \tableau{1 \& 1 \& 2 \\} &\neq \tableau{1 \& 1 \\ \& 2 \\} = \phypo{121}; \\
    \phypo{112} = \tableau{1 \& 1 \& 2 \\} &\neq \tableau{1 \& 1 \\ \& 2 \\} = \phypo{211}; \\
    \phypo{212} = \tableau{1 \\ 2 \& 2 \\} &\neq \tableau{1 \& 2 \& 2 \\} = \phypo{122}.
  \end{align*}
  Thus $\hypo$ satisfies no length-$3$ identity.
\end{proof}

\subsection{Stalactic and taiga monoids}
\label{subsec:staltaigiden}

\begin{lemma}
  \label{lem:staliden}
  The stalactic monoid satisfies the non-trivial identity $xyx = yxx$.
\end{lemma}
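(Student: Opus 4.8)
The plan is to prove the identity directly by showing that $\pstal{pqp} = \pstal{qpp}$ for arbitrary words $p, q \in \aA^*$; this is exactly the assertion that $\stal$ satisfies $xyx = yxx$ under the substitution sending $x$ to the class of $p$ and $y$ to the class of $q$, and non-triviality is immediate since $xyx$ and $yxx$ are distinct words. The key preliminary observation I would establish is a clean normal-form description of stalactic tableaux. By \fullref{Algorithm}{alg:stalinsertone}, together with the right-to-left insertion order used for $\stal$, a tableau $\pstal{w}$ is completely determined by two pieces of data: its evaluation $\ev{w}$, which records the height (multiplicity) of each column, and the left-to-right order in which the distinct symbols appear along the top row. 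I would then identify this top-row order precisely as the ordering of the symbols by position of last occurrence in $w$: reading $w$ right-to-left, each genuinely new symbol is placed at the \emph{left} end of the top row, so the first new symbol inserted (the one whose last occurrence in $w$ is latest) ends up rightmost, while the last new symbol inserted (whose last occurrence is earliest) ends up leftmost.

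With this normal form in hand the proof splits into two checks. First, $\ev{pqp} = 2\ev{p} + \ev{q} = \ev{qpp}$, so the two tableaux have identical column multiplicities. Second, I would compare the last-occurrence orders in $pqp$ and in $qpp$, partitioning the alphabet into symbols occurring in $p$ and symbols occurring in $q$ but not in $p$. For a symbol occurring in $p$, its last occurrence in both $pqp$ and $qpp$ lies in the trailing copy of $p$, so the relative order among such symbols is governed by the same block and therefore agrees. For a symbol occurring only in $q$, its last occurrence lies inside the single copy of $q$ in each word, so the relative order among these symbols likewise agrees. In the cross case, a $q$-only symbol always has its last occurrence strictly before that of any $p$-symbol: in $pqp$ because the middle $q$ precedes the trailing $p$, and in $qpp$ because the leading $q$ precedes both copies of $p$. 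Hence every $q$-only symbol precedes every $p$-symbol in both words, the two top-row orders coincide, and the tableaux are equal.

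The main obstacle, and the step I would treat most carefully, is pinning down the normal-form lemma correctly — in particular getting the \emph{direction} of the induced order right, since the right-to-left reading combined with the ``add new symbols on the left'' rule of \fullref{Algorithm}{alg:stalinsertone} reverses the naive intuition. Once the characterization of $\pstal{w}$ by its evaluation together with the last-occurrence order is correctly stated and justified, the remaining case analysis on where each symbol's last occurrence falls is entirely mechanical.
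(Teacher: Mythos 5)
Your proof is correct and takes essentially the same approach as the paper's: both characterize $\pstal{w}$ by its evaluation together with the left-to-right top-row order, identify that order with the order of rightmost (last) occurrences of symbols in $w$, and observe that these two data coincide for $uvu$ and $vuu$. The only difference is that you spell out the case analysis showing the last-occurrence orders agree, a step the paper asserts as immediate.
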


\begin{proof}
  Let $x,y \in \stal$ and let $u,v \in \aA^*$ be words representing $x,y$, respectively. Since the tree $\pstal{w}$ is
  computed by applying \fullref{Algorithm}{alg:stalinsertone} to each symbol in the word $w \in \aA^*$, proceeding right
  to left, it is clear that the order of the rightmost appearances of each symbol in $w$ determines order of symbols in
  the top row of the stalactic tableau $\pstal{w}$. Since the order of rightmost appearances of each symbol in the words
  $uvu$ and $vuu$ are the same, the order of symbols in the top row of
$\pstal{uvu}$ and $\pstal{vuu}$ is also the same. Since
  $\ev{uvu} = \ev{vuu}$, the length of corresponding columns in $\pstal{uvu}$ and $\pstal{vuu}$ are equal. Hence
  $xyx = \pstal{uvu} = \pstal{vuu} = yxx$.
\end{proof}

\begin{lemma}
  \label{lem:taigiden}
  The taiga monoid does not satisfy either of the identities $xxy = xyx$ or $xxy = yxx$, and does not satisfy a
  non-trivial identity of length $2$.
\end{lemma}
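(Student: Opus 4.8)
The plan is to exploit the fact that to refute an identity it suffices to exhibit a single substitution under which the two sides evaluate to distinct elements of the taiga monoid; since elements of this monoid are identified with binary search trees with multiplicities, this amounts to computing two such trees and observing that they differ. The computation is made transparent by one structural observation about \fullref{Algorithm}{alg:taiginsertone}. Because the insertion proceeds right-to-left, the first symbol processed when computing $\ptaig{w}$ is the rightmost letter of $w$; this symbol creates the root node, and no later insertion ever displaces it, since a subsequent symbol either recurses into a subtree or merely increments the multiplicity of the root. Hence the label at the root of $\ptaig{w}$ is exactly the value of the rightmost letter of $w$.

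First I would handle the two length-$3$ identities. The crucial point is that each of $xxy = xyx$ and $xxy = yxx$ has its two sides ending in different variables: the left-hand sides end in $y$, while the right-hand sides end in $x$. Taking the homomorphism $\set{x,y}^* \to \taig$ sending $x$ to $1$ and $y$ to $2$, the two words on each side then have different rightmost letters, so by the observation above the resulting trees have different root labels and are therefore distinct. Concretely, $\ptaig{112}$ has root $2$, whereas $\ptaig{121}$ and $\ptaig{211}$ both have root $1$; thus the taiga monoid satisfies neither identity.

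For the length-$2$ claim I would invoke \fullref[(1)]{Lemma}{lem:lengthtwothree}: since $\taig$ contains the free monogenic submonoid $\taig_1$, any non-trivial length-$2$ identity it satisfied would have to be equivalent to $xy = yx$, so it is enough to show that $\taig$ is non-commutative. Once more the two sides $xy$ and $yx$ end in different variables, so under the same substitution $\ptaig{12}$ (root $2$) and $\ptaig{21}$ (root $1$) are distinct. Hence $\taig$ is non-commutative and satisfies no length-$2$ identity.

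The calculations here are entirely routine, so there is no genuine obstacle; the only point requiring care is the right-to-left orientation of the insertion, since reversing it would interchange the roles of the first and last letters and invalidate the root observation. Recognising that invariant is what reduces the whole lemma to inspecting the final letters of the two sides, in place of computing each tree by hand. I could equally well just display the three relevant pairs of small trees directly, but the root observation makes the distinctness immediate.
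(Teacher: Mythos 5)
Your proposal is correct and follows essentially the same route as the paper: the key observation in both is that, because insertion proceeds right-to-left, the rightmost letter of $w$ labels the root of $\ptaig{w}$, so the substitution $x \mapsto 1$, $y \mapsto 2$ separates the two sides of each length-$3$ identity, and non-commutativity disposes of length $2$. Your version is marginally more explicit in the length-$2$ case (citing \fullref[(1)]{Lemma}{lem:lengthtwothree} and exhibiting $\ptaig{12} \neq \ptaig{21}$ where the paper just says $\taig$ is clearly non-commutative), but the argument is the same.
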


\begin{proof}
  To see that $\taig$ does not satisfy either of the identities $xxy = xyx$ or $xxy = yxx$, note that the rightmost
  symbol of $w \in \aA^*$ labels the root node of $\ptaig{w}$, but the rightmost symbols of both these identities are
  different. Thus substituting $1$ for $x$ and $2$ for $y$ shows that neither identity can be satisfied by $\taig$.

  Finally, $\taig$ does not satisfy a non-trivial identity of length~$2$ since it is clearly non-commutative.
\end{proof}

Since the taiga monoid is a homomorphic image of the stalactic monoid \cite[\S~5]{priez_lattice}, any identity that is satisfied by $\stal$ is
satisfied by $\taig$. Combining this with \fullref{Lemmata}{lem:lengthtwothree}, \ref{lem:staliden}, and \ref{lem:taigiden}
gives the following results:

\begin{proposition}
  \label{prop:staliden}
  The stalactic monoid satisfies the non-trivial identity $xyx = yxx$. Furthermore, this is the unique shortest non-trivial identity
  satisfied by the stalactic monoid.
\end{proposition}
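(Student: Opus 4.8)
The plan is to combine the satisfaction result of \fullref{Lemma}{lem:staliden} with a short enumeration of the candidate identities of minimal length, ruling out all but $xyx = yxx$. First I would note that, by \fullref{Lemma}{lem:staliden}, $\stal$ satisfies the length-$3$ identity $xyx = yxx$, so a shortest non-trivial identity exists and has length at most $3$.

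Next I would pin down the minimal length exactly. Since $\stal$ contains the free monogenic submonoid $\stal_1$, \fullref{Lemma}{lem:lengths} rules out any non-trivial identity of length $1$, and \fullref[(1)]{Lemma}{lem:lengthtwothree} shows that a length-$2$ identity would force commutativity; but $\stal$ is non-commutative, so no length-$2$ identity is satisfied. Hence a shortest non-trivial identity has length exactly $3$. By \fullref{Lemmata}{lem:shortest} and \ref{lem:lengths}, it suffices to consider identities $u = v$ over $\set{x,y}$ with $\abs{u} = \abs{v} = 3$, and \fullref[(2)]{Lemma}{lem:lengthtwothree} tells me that, up to equivalence, the only candidates are
\[
  xxy = xyx, \qquad xxy = yxx, \qquad xyx = yxx.
\]

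The remaining task is to eliminate the first two candidates. The key observation is that $\taig$ is a homomorphic image of $\stal$, so every identity satisfied by $\stal$ is also satisfied by $\taig$; equivalently, any identity \emph{not} satisfied by $\taig$ cannot be satisfied by $\stal$. By \fullref{Lemma}{lem:taigiden}, $\taig$ satisfies neither $xxy = xyx$ nor $xxy = yxx$, and therefore neither does $\stal$. This leaves $xyx = yxx$ as the unique length-$3$ identity, up to equivalence, satisfied by $\stal$, which establishes the uniqueness claim.

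I expect this to be essentially a matter of assembling the earlier lemmas rather than fresh work: the combinatorial content already lives in \fullref{Lemmata}{lem:staliden} and \ref{lem:taigiden}. The only point that needs care is the direction of the implication along the surjection from $\stal$ onto $\taig$ --- one must use that identity-satisfaction passes to homomorphic images and apply it contrapositively, transferring non-satisfaction from $\taig$ back to $\stal$ rather than the other way round. Beyond confirming that the case list in \fullref[(2)]{Lemma}{lem:lengthtwothree} is exhaustive, no genuine obstacle arises.
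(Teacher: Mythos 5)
Your proposal is correct and follows exactly the paper's route: the paper proves this proposition by combining Lemma~\ref{lem:staliden} (satisfaction), Lemma~\ref{lem:lengthtwothree} (the list of candidate short identities), and Lemma~\ref{lem:taigiden} together with the fact that $\taig$ is a homomorphic image of $\stal$, so identities satisfied by $\stal$ pass to $\taig$ and non-satisfaction transfers back contrapositively. No differences worth noting.
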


\begin{proposition}
  \label{prop:taigiden}
  The taiga monoid satisfies the non-trivial identity $xyx = yxx$. Furthermore, this is the unique shortest non-trivial identity
  satisfied by the taiga monoid.
\end{proposition}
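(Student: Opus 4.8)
The plan is to read this proposition straight off the preceding lemmata rather than manipulating binary search trees with multiplicities directly. For the satisfaction claim, I would combine \fullref{Lemma}{lem:staliden}---that $\stal$ satisfies $xyx = yxx$---with the preliminary observation that every homomorphic image of a monoid inherits all of its identities. Since $\taig$ is a homomorphic image of $\stal$ (as recalled from \cite[\S~5]{priez_lattice}), it follows at once that $\taig$ satisfies the non-trivial identity $xyx = yxx$, so the shortest non-trivial identity satisfied by $\taig$ has length at most $3$.

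To pin down the length and rule out competitors, I would descend from the general reduction. By \fullref{Lemma}{lem:shortest} a shortest non-trivial identity may be taken over an alphabet of at most two variables, so it suffices to eliminate the shorter two-variable identities. A length-$1$ identity is impossible, and length $2$ is ruled out because $\taig$ is non-commutative: by \fullref[(1)]{Lemma}{lem:lengthtwothree} the only length-$2$ candidate is the commutative identity $xy = yx$, and \fullref{Lemma}{lem:taigiden} records directly that $\taig$ satisfies no length-$2$ identity. Hence the minimal length is exactly $3$. Invoking \fullref[(2)]{Lemma}{lem:lengthtwothree}, up to equivalence the only length-$3$ two-variable candidates are $xxy = xyx$, $xxy = yxx$, and $xyx = yxx$, and \fullref{Lemma}{lem:taigiden} eliminates the first two; thus $xyx = yxx$ is a shortest non-trivial identity.

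The last---and only mildly delicate---point is uniqueness up to equivalence. Here I would observe that the three length-$3$ identities above are pairwise inequivalent, because the set of positions at which the two sides disagree, namely $\{2,3\}$, $\{1,3\}$, and $\{1,2\}$ respectively, is preserved under both renaming of variables and interchange of the two sides of an identity. Consequently $xyx = yxx$ is the unique shortest non-trivial identity satisfied by $\taig$. I do not anticipate any genuine obstacle: the entire argument is a bookkeeping consequence of \fullref{Lemmata}{lem:shortest}, \ref{lem:lengthtwothree}, \ref{lem:staliden}, and \ref{lem:taigiden}, with the inequivalence check in the uniqueness step being the only place that calls for explicit verification.
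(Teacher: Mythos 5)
Your proof is correct and follows essentially the same route as the paper, which likewise derives the proposition by combining the homomorphic-image argument from $\stal$ (Lemma \ref{lem:staliden}) with Lemmata \ref{lem:lengthtwothree} and \ref{lem:taigiden} to rule out shorter identities and the other two length-$3$ candidates. Your explicit check that the three length-$3$ identities are pairwise inequivalent is a small additional detail the paper leaves implicit, but the argument is the same.
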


\subsection{Sylvester monoid}
\label{subsec:sylviden}

By the definition of a binary search tree, if a tree has a node with label $a$, then any other node with label $a$ is
either above it or in its left subtree. This shows that there is a unique path from the root to a leaf node that
contains all nodes labelled $a$. In particular, there is at most one leaf node with label $a$. Furthermore, there is a
unique node $a$ that is both the leftmost node with label $a$ and the node labelled $a$ that is furthest from the
root. Similarly, there is a unique node $a$ that is both the rightmost node with label $a$ and the node labelled $a$
that is closest to the root.

\begin{lemma}
  \label{lem:sylvleftcanc}
  The sylvester monoid is left-cancellative.
\end{lemma}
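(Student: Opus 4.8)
The plan is to show that left-multiplication by any element of $\sylv$ is injective, reducing everything to the case of a single inserted symbol. Recall that $\psylv{\cdot}$ is computed by the right-to-left insertion of \fullref{Algorithm}{alg:sylvinsertone}. Hence if $c \in \aA^*$ represents an element $a \in \sylv$ and $u \in \aA^*$ represents $s \in \sylv$, the tree $\psylv{cu}$ is obtained from $\psylv{u}$ by inserting the symbols of $c$ one at a time, proceeding from right to left. Left-cancellativity will follow once I establish that inserting a single symbol into a right strict binary search tree is an injective operation: the map $\psylv{u} \mapsto \psylv{cu}$ is then a composite of injective single-symbol insertions, so $\psylv{cu} = \psylv{cv}$ forces $\psylv{u} = \psylv{v}$, that is, $s = t$.

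The key step, and the main obstacle, is therefore to prove that for a fixed symbol $a \in \aA$ the operation sending a right strict binary search tree $T$ to the tree $T'$ obtained by inserting $a$ is injective. I would do this by exhibiting an explicit left inverse, namely deletion of a distinguished node. The insertion algorithm creates the new node $a$ as a leaf, in a previously empty position. I claim this new node is the \emph{unique} leaf of $T'$ labelled $a$: by the structural observation preceding the lemma, all nodes of $T'$ labelled $a$ lie on a single root-to-leaf path and at most one of them is a leaf, so the freshly created leaf labelled $a$ is forced to be that unique one. The point requiring care is the case where $T$ already contains a node labelled $a$; here I would note that the insertion path for $a$ moves into the right subtree at every node labelled $a$ (since $a \geq a$), so if $T$ had a leaf labelled $a$, that leaf acquires a right child during insertion and ceases to be a leaf. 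In every case the new node is the unique leaf labelled $a$ of $T'$.

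With this intrinsic characterization in hand, the inverse is clear: given $T'$, locate its unique leaf labelled $a$ and delete it; this recovers $T$ exactly, so single-symbol insertion of $a$ has a left inverse and is injective. Composing the insertions corresponding to the letters of a representative word $c$ of $a$ shows that left-multiplication by $a$ is injective, and hence $\sylv$ is left-cancellative.
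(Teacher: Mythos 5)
Your proposal is correct and follows essentially the same route as the paper: reduce to cancelling a single generator, observe that the freshly inserted node is the unique leaf labelled $a$ (using the structural remarks preceding the lemma), and delete it to recover the original tree. You spell out slightly more detail (e.g.\ why a pre-existing leaf labelled $a$ ceases to be a leaf), but the argument is the same.
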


\begin{proof}
  Let $u,v \in \aA^*$ and $a \in \aA$. Suppose that $\psylv{au} = \psylv{av}$. That is, the binary search trees
  $\psylv{au}$ and $\psylv{av}$ are equal. Note further that the last symbol $a$ inserted into both binary search trees
  is a leaf node. Deleting this (unique) leaf node labelled $a$ from the equal trees $\psylv{au}$ and $\psylv{av}$
  leaves equal binary search trees. That is, $\psylv{u}$ and $\psylv{v}$ are equal.

  Since $u$ and $v$ are arbitrary words and $a$ represents an arbitrary generator, it follows that $\sylv$ is
  left-cancellative.
\end{proof}

\begin{lemma}
  \label{lem:sylvuxxvyx}
  The sylvester monoid does not satisfy an identity equivalent to one of the
form $uxx = vyx$, for any $u,v\in\{x,y\}^*$.
\end{lemma}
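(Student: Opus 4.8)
The plan is to refute the identity directly by exhibiting a single substitution that separates the two sides for every choice of $u$ and $v$. Since whether a monoid satisfies an identity is unchanged by renaming variables and swapping the two sides, it suffices to show that $\sylv$ does not satisfy the literal identity $uxx = vyx$ for any $u,v \in \set{x,y}^*$. I would take the homomorphism $\phi \colon \set{x,y}^* \to \sylv$ determined by $\phi(x) = 1$ and $\phi(y) = 2$, and compare the binary search trees $\psylv{\phi(uxx)} = \psylv{\phi(u)\,1\,1}$ and $\psylv{\phi(vyx)} = \psylv{\phi(v)\,2\,1}$.

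The key structural observation concerns the right child of the root. Recall that, for the sylvester monoid, $\psylv{w}$ is computed by inserting the symbols of $w$ right-to-left via \fullref{Algorithm}{alg:sylvinsertone}, so the root is the last symbol of $w$ and each new symbol is added as a leaf without ever relocating a node already present. Under $\phi$ every symbol appearing is $1$ or $2$, hence $\geq 1$, and both words $\phi(u)\,1\,1$ and $\phi(v)\,2\,1$ end in $1$; so in each tree the root is labelled $1$. The second symbol inserted is the penultimate symbol of the word, which is $\geq 1$ and is therefore placed into the (then empty) right subtree of the root, becoming its right child. On the left this penultimate symbol is $1$, while on the right it is $2$.

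Since subsequent insertions never move an existing node, the right child of the root is permanently labelled $1$ in $\psylv{\phi(uxx)}$ and $2$ in $\psylv{\phi(vyx)}$, independently of the prefixes $\phi(u)$ and $\phi(v)$. Equal labelled trees must carry equal labels at corresponding positions, so these two trees are distinct; hence $\phi$ witnesses that $\sylv$ does not satisfy $uxx = vyx$. I expect the only point needing care to be exactly this independence from the arbitrary prefixes: one must confirm that the right child of the root is pinned down by the penultimate symbol alone, which follows because that symbol, being $\geq$ the root, is inserted into the empty right subtree immediately after the root, and binary-search-tree insertion never displaces nodes already in place.
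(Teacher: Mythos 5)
Your overall strategy---one fixed substitution that separates the two sides for every choice of $u$ and $v$---is exactly the paper's, but your substitution is the wrong way round, and the argument breaks at the tie-breaking step of the insertion. In a right strict binary search tree the labels in the right subtree of a node must be \emph{strictly} greater than that node, so a symbol equal to the root must be inserted to the \emph{left}, not the right; this is confirmed by the paper's worked example $\psylv{3613151265}$ and by its own computation of $\psylv{\cdots 22}$ (where the second $2$ is the \emph{left} child of the root), even though the condition ``$a\geq x$'' as printed in \fullref{Algorithm}{alg:sylvinsertone} reads as if ties went right. Under the correct convention, in $\psylv{\phi(u)\,1\,1}$ the penultimate $1$ becomes the \emph{left} child of the root $1$, and the right child of the root is instead determined by the rightmost $2$ occurring in $\phi(u)$, if any. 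So your claimed invariant (``the right child of the root is permanently labelled $1$ on the left-hand side'') is false, and the substitution genuinely fails: taking $u=y$ and $v=x$ gives $\phi(yxx)=211$ and $\phi(xyx)=121$, and $\psylv{211}=\psylv{121}$ (both are the tree with root $1$, left child $1$, right child $2$); indeed $121\equiv_\sylv 211$ is an instance of the defining sylvester relation $ac\,w\,b\equiv ca\,w\,b$ for $a\leq b<c$.

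The fix is the paper's choice: substitute $2$ for $x$ and $1$ for $y$, so that the last letter---hence the root---is the \emph{largest} letter $2$. Then the penultimate letter, being $\leq 2$, is guaranteed to land in the then-empty left subtree of the root, and since insertion never displaces existing nodes, the left child of the root equals the penultimate letter of the word regardless of the prefix. This gives left child $2$ for $\psylv{\phi(u)\,2\,2}$ versus left child $1$ for $\psylv{\phi(v)\,1\,2}$, separating the trees for all $u,v$. Your structural ideas (the root is the last letter; the next insertion fills an empty subtree of the root; later insertions cannot displace it) are all sound---only the direction of the tie-break, and consequently which variable should be the larger letter and which child of the root to inspect, needs to be corrected.
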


\begin{proof}
  Using the \fullref{Algorithm}{alg:sylvinsertone}, one sees that
  \[
    \psylv{\cdots 22} =
    \begin{tikzpicture}[tinybst,baseline=(0)]
      \node (root) {$2$}
      child { node (0) {$2$}
        child { node[triangle] (00) {} }
        child[missing]
      }
      child { node[triangle] (1) {} };
    \end{tikzpicture}
    \text{ and }
    \psylv{\cdots 12} =
    \begin{tikzpicture}[tinybst,baseline=(0)]
      \node (root) {$2$}
      child { node (0) {$1$}
        child { node[triangle] (00) {} }
        child[missing]
      }
      child { node[triangle] (1) {} };
    \end{tikzpicture};
  \]
  note that these binary search trees differ in the left child of the root node. Thus to see that the identity
  $uxx = vyx$ cannot be satisfied by $\sylv$, one can substitute $2$ for $x$ and $1$ for $y$.
\end{proof}

\begin{lemma}
  \label{lem:sylvsameeval}
  Let $p,q,r \in \sylv$ be such that $\evlit(p) = \evlit(q) = \evlit(r)$. Then $pr = qr$.
\end{lemma}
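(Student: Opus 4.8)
The plan is to translate the statement into the language of right strict binary search trees and then to show that the insertions involved commute. Fix any words $P,Q,R \in \aA^*$ representing $p,q,r$; since evaluation is well defined on $\sylv$, we have $\ev{P}=\ev{Q}=\ev{R}$, so the three words have the same content. Computing $\psylv{PR}$ by \fullref{Algorithm}{alg:sylvinsertone}, proceeding right-to-left, amounts to first building $T := \psylv{R}$ and then inserting the letters of $P$ one at a time, right-to-left, into $T$; likewise $\psylv{QR}$ is obtained by inserting the letters of $Q$ into the same tree $T$. Because $\ev{P}=\ev{Q}=\ev{R}$ equals the content of $T$, and the tree only ever gains nodes, at the moment any letter is inserted it already labels a node of the current tree. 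So the whole problem reduces to showing that inserting the letters of $P$ into $T$ and inserting the letters of $Q$ into $T$ produce the same tree, given that $P$ and $Q$ are rearrangements of one multiset all of whose symbols already occur in $T$.

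First I would record where a symbol lands when it is inserted into a tree that already contains it. By the observations preceding \fullref{Lemma}{lem:sylvleftcanc}, all nodes with a given label $a$ lie on a single root-to-leaf path; inserting a further $a$ navigates down through these $a$-nodes, turns into the region of labels strictly less than $a$ below the bottom-most $a$-node, and descends to a new leaf. Thus the new node becomes the bottom-most node labelled $a$, its insertion path is determined by $T$ alone, and every node lying strictly below the bottom-most $a$-node on this path carries a label strictly less than $a$. This last fact is what drives the key step.

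The heart of the argument is a commutation lemma: for distinct symbols $a \neq b$ that both occur in a right strict binary search tree $T$, inserting $a$ and then $b$ yields the same tree as inserting $b$ and then $a$. The two new leaves can interfere only if one lies on the insertion path of the other; since a newly inserted leaf occupies a slot that was empty in $T$, this forces the insertion paths of $a$ and $b$ in $T$ to terminate at one common empty slot $\pi$. Suppose they do. Then both paths coincide with the unique path from the root to $\pi$, so this path contains the bottom-most $a$-node $\alpha$ and the bottom-most $b$-node $\beta$ as proper ancestors of $\pi$, and one of $\alpha,\beta$ is an ancestor of the other. If $\alpha$ lies above $\beta$, then $\beta$ lies strictly below $\alpha$, forcing $b < a$; but the navigation of $a$ at $\beta$ compares $a$ with $b$ and, as $a>b$, turns right, whereas the common path turns left at $\beta$ (the navigation of $b$ turns left at its own bottom-most node) — a contradiction. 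The case $\beta$ above $\alpha$ is symmetric with the roles of $a$ and $b$ interchanged. Hence the two paths terminate at distinct slots, so neither new leaf lies on the other's path, the two leaves are placed independently, and the insertions commute.

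Finally I would combine these. Since $P$ and $Q$ have the same content, one is obtained from the other by a sequence of transpositions of adjacent letters, and each such transposition merely swaps the order of two consecutive insertions — of symbols that are present in the current tree — which leaves the resulting tree unchanged by the commutation lemma. Therefore inserting $P$ into $T$ and inserting $Q$ into $T$ give the same binary search tree, so $\psylv{PR}=\psylv{QR}$, that is, $pr=qr$. The main obstacle is the commutation lemma, and within it the claim that the insertion paths of two distinct present symbols cannot end at the same slot; the ancestor/turn dichotomy above is what makes this work, and it is the step that genuinely uses right-strictness together with the single-path structure of equal labels.
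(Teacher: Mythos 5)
Your proof is correct, and it reaches the conclusion by a genuinely different route from the paper's. The paper also reduces to inserting the letters of $P$ and $Q$ into $T=\psylv{R}$ and also exploits the fact that every such letter already labels a node of $T$, but it then argues globally: for each symbol value $c$ it locates (via the predecessor value $b$ and the lowest common ancestor of the rightmost $b$-node and the leftmost $c$-node) a single empty subtree of $T$ into which every new copy of $c$ is funnelled, observes that distinct values are funnelled into distinct empty subtrees, and concludes that the resulting tree depends only on the evaluation. You instead prove a local commutation lemma --- consecutive insertions of distinct letters both present in the current tree commute, because their insertion paths cannot terminate at the same empty slot --- and finish with the standard fact that words of equal content are linked by adjacent transpositions. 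Both arguments rest on the same structural observations (all $a$-nodes lie on one root-to-leaf path, and below the bottom-most $a$-node the insertion path of $a$ meets only labels smaller than $a$); your divergence-at-$\beta$ argument is a clean substitute for the paper's predecessor/lowest-common-ancestor case analysis, at the cost of the extra (routine) reduction to adjacent transpositions, while the paper's version buys an explicit description of where each inserted letter lands. One caveat worth noting: your key step ``the navigation of $b$ turns left at its own bottom-most node'' relies on the convention that a letter equal to the current node's label descends to the left; this is what the definition of a right strict binary search tree forces and what the paper's worked example and its own proof use, but it is the opposite of what Algorithm~\ref{alg:sylvinsertone} literally says (``if $a\geq x$ \ldots\ right subtree''), so you are silently correcting a typo in the algorithm rather than following its printed text.
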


\begin{proof}
  Let $u,v,w \in \aA^*$ be words representing $p,q,r$, respectively. The tree $\psylv{x}$ is computed by applying
  \fullref{Algorithm}{alg:sylvinsertone} to each symbol in $x$, proceeding right to left. That is, $\psylv{uw}$ and
  $\psylv{vw}$ are obtained by inserting $u$ and $v$ (respectively) into $\psylv{w}$. Since $\ev{u} = \ev{v} = \ev{w}$,
  every symbol that appears in $u$ or $v$ also appears in $w$ and thus in $\psylv{w}$.

  Consider how further symbols from $u$ or $v$ are inserted into $\psylv{w}$:
  \begin{itemize}
  \item Let $a$ be the smallest symbol that appears in $u$, $v$, and $w$. Then the leftmost node of $\psylv{w}$ must be
    labelled by $a$. Thus, during the computation of $\psylv{uw}$ and $\psylv{vw}$, all symbols $a$ in $u$ and $v$ will
    certainly be inserted into the left subtree of this leftmost node in $\psylv{w}$, and no other symbols are inserted
    into this left subtree by the minimality of $a$.
  \item Let $c$ be some symbol other than $a$ that appears in $u$, $v$, and $w$, and let $b$ be the maximum symbol less
    than $c$ appearing in $u$, $v$, and $w$. Let $N_c$ be the leftmost node in $\psylv{w}$ labelled by $c$ and let $N_b$
    be the rightmost node in $\psylv{w}$ labelled by $b$.

    Suppose $v$ is the label of the lowest common ancestor $N_v$ of the nodes
$N_b$ and $N_c$. If $N_v$ is neither $N_b$
    not $N_c$, then $b \leq v < c$, which implies $v = b$ by the choice of $b$, which contradicts the fact that $N_b$ is
    the rightmost node labelled $b$. Thus the lowest common ancestor of $N_b$
and $N_c$ must be one of $N_b$ or
    $N_c$. That is, one of the following must hold:
    \begin{itemize}
    \item $N_b$ is above $N_c$. Then $N_c$ is in the right subtree of $N_b$, since $b < c$. Since there is no node that
      is to the right of $N_b$ and to the left of $N_c$, it follows that $N_c$ has an empty left subtree. Thus any
      symbol $c$ in $u$ or $v$ will be inserted into this currently empty left subtree of $N_c$, and no other symbols
      will be inserted into this subtree by the maximality of $b$ among the symbols less than $c$.
    \item $N_c$ is above $N_b$. Then $N_b$ is in the left subtree of $N_c$, since $b < c$. Since there is no node that
      is to the right of $N_b$ and to the left of $N_c$, it follows that $N_b$ is the left child of $N_c$, and that
      $N_b$ has an empty right subtree. Thus any symbol $c$ in $u$ or $v$ will be inserted into this currently empty
      right subtree of $N_b$, and no other symbols will be inserted into this subtree by the maximality of $b$ among the
      symbols less than $c$.
    \end{itemize}
  \end{itemize}
  Combining these cases, one sees that every symbol $d$ from $u$ or $v$ is inserted into a particular previously empty
  subtree of $\psylv{w}$, dependent only on the value of the symbol $d$ (and not on its position in $u$ or $v$), and
  that unequal symbols are inserted into different subtrees. Since $\ev{u} = \ev{v}$, the same number of symbols $d$ are
  inserted, for each such symbol $d$. Hence $\psylv{uw} = \psylv{vw}$ and thus $pr = \pstal{uw} = \pstal{vw} = qr$.
\end{proof}

\begin{proposition}
  \label{prop:sylviden}
  The sylvester monoid satisfies the non-trivial identity $xyxy = yxxy$. Furthermore, up to equivalence, this is the unique shortest
  identity satisfied by the sylvester monoid.
\end{proposition}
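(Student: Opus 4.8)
The plan is to prove the two assertions separately. For the positive part, I would deduce the identity $xyxy = yxxy$ directly from \fullref{Lemma}{lem:sylvsameeval}: given $s,t \in \sylv$, apply that lemma with $p = st$, $q = ts$, and $r = st$. Since evaluation is additive, $\ev{p} = \ev{q} = \ev{r} = \ev{s} + \ev{t}$, so the lemma yields $pr = qr$, that is, $stst = tsst$. As $s,t$ range over $\sylv$, this is exactly the statement that $\sylv$ satisfies $xyxy = yxxy$.

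For the uniqueness part, \fullref{Lemma}{lem:shortest} and \fullref{Lemma}{lem:lengths} reduce the problem to identities $u = v$ over $\set{x,y}$ with $\abs{u} = \abs{v}$, so it suffices to show that $\sylv$ satisfies no non-trivial identity of length $2$ or $3$ and that every length-$4$ identity it satisfies is equivalent to $xyxy = yxxy$. Length $2$ is excluded because $\sylv$ is non-commutative, so by \fullref[(1)]{Lemma}{lem:lengthtwothree} it cannot satisfy $xy = yx$. For length $3$, \fullref[(2)]{Lemma}{lem:lengthtwothree} leaves only the three candidates $xxy = xyx$, $xxy = yxx$, and $xyx = yxx$. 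Here I would use the fact that the root of $\psylv{w}$ is labelled by the last symbol of $w$ (the algorithm inserts right-to-left): substituting $1$ for $x$ and $2$ for $y$ makes the two sides have roots with different labels for the first two candidates, so those are not satisfied; the third, $xyx = yxx$, has the form $vyx = uxx$ and so is excluded by \fullref{Lemma}{lem:sylvuxxvyx}.

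The core of the argument is the length-$4$ analysis. Having ruled out shorter identities, I would observe two structural constraints on any length-$4$ identity $u = v$ satisfied by $\sylv$. First, $u$ and $v$ must begin with different variables: if they shared a first letter, left-cancellativity (\fullref{Lemma}{lem:sylvleftcanc}) would produce a satisfied length-$3$ identity, which is impossible. Second, by the root observation $u$ and $v$ must end with the same variable. Together with the constraints $\abs{u}_x = \abs{v}_x$ and $\abs{u}_y = \abs{v}_y$ from \fullref{Lemma}{lem:lengths}, and after fixing up to equivalence that $u$ begins with $x$ and $v$ with $y$, this leaves only finitely many candidate pairs, organized by evaluation $(\abs{u}_x,\abs{u}_y) \in \set{(3,1),(2,2),(1,3)}$ (the extreme cases are impossible, since $u$ and $v$ use different initial letters). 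I would then knock out almost all of them with \fullref{Lemma}{lem:sylvuxxvyx} and its $x \leftrightarrow y$ mirror, which dispose of every candidate in which one side ends in $xx$ and the other in $yx$, or one side ends in $yy$ and the other in $xy$. This leaves only $xyxy = yxxy$ and $xyyx = yxyx$, which are equivalent to one another, together with the mirror pair $xyxx = yxxx$ and $xyyy = yxyy$.

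That mirror pair is the main obstacle: in $xyxx = yxxx$ both sides end in $xx$ (so \fullref{Lemma}{lem:sylvuxxvyx} does not apply) and they begin with different letters (so left-cancellativity gives nothing), so neither structural tool reaches it. I expect to dispatch it by an explicit computation: substituting $1$ for $x$ and $2$ for $y$ gives the words $1211$ and $2111$, whose sylvester trees are readily checked to be distinct, so $\sylv$ does not satisfy $xyxx = yxxx$; its mirror $xyyy = yxyy$ then fails by the $x \leftrightarrow y$ symmetry. This completes the enumeration and shows that, up to equivalence, $xyxy = yxxy$ is the only identity of length $4$ satisfied by $\sylv$, hence the unique shortest one.
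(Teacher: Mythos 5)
Your proposal is correct and follows essentially the same route as the paper: the identity is deduced from Lemma~\ref{lem:sylvsameeval} with $p=r=xy$ and $q=yx$, and uniqueness from the root observation, left-cancellativity, Lemma~\ref{lem:sylvuxxvyx}, and one explicit tree computation (your check of $1211$ versus $2111$ handles an identity equivalent to the paper's $xyyy=yxyy$, checked there via $1222$ versus $2122$). The only divergence is the length-$3$ step, which the paper dispatches by noting that $\hypo$ is a homomorphic image of $\sylv$ and satisfies no length-$3$ identity (Proposition~\ref{prop:hypoiden}), whereas you rule out the three candidates directly inside $\sylv$; both arguments are valid.
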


\begin{proof}
  Let $x,y \in \sylv$. Let $p = r = xy$ and $q = yx$. Then $\ev{p} = \ev{q} = \ev{r}$, so $pr = qr$ by
  \fullref{Lemma}{lem:sylvsameeval}. Thus $\sylv$ satisfies $xyxy = yxxy$.

  Since $\hypo$ is a homomorphic image of $\sylv$ \cite[Example~6]{priez_lattice}, and $\hypo$ satisfies no length-$3$
  identity by \fullref{Proposition}{prop:hypoiden}, it follows that $\sylv$ cannot satisfy a length-$3$ identity.

  So let $u = v$ be some length-$4$ identity satisfied by $\sylv$. Suppose $u = u_1u_2u_3u_4$ and $v = v_1v_2v_3v_4$,
  where $u_i,v_i \in \set{x,y}$. Since the rightmost symbol in a word $w$ determines the root node of $\psylv{w}$, it
  follows that $u_4$ and $v_4$ are the same symbol; interchanging $x$ and $y$ if necessary, assume this is $y$. If $u_1$
  and $v_1$ were the same symbol, then the left-cancellativity of $\sylv$ (\fullref{Lemma}{lem:sylvleftcanc}) would
  imply that $\sylv$ satisfied the length-$3$ identity $u_2u_3u_4 = v_2v_3v_4$,
which contradicts the previous
  paragraph. Thus $u_1$ and $v_1$ are different symbols. Interchanging $u$ and $v$ if necessary, assume that $u_1$ is
  $x$ and $v_1$ is $y$. The condition $|u|_y = |v|_y$ implies that at least one of $u_2$ and $u_3$
  is $y$, which yields the following three possibilities for $u$:
  \[
    xyyy, \quad xxyy, \quad xyxy.
  \]
  The conditions $|u|_x = |v|_x$ and $|u|_y = |v|_y$ now imply the following four possibilities for the identity $u = v$
  (with the first possibility for $u$ above giving two different identities):
  \[
    xyyy = yxyy,\quad xyyy = yyxy, \quad xxyy = yxxy,\quad xyxy = yxxy.
  \]
  The last of these is the identity already proven to hold in $\sylv$. The second and third cannot hold in $\sylv$ by
  \fullref{Lemma}{lem:sylvuxxvyx}. In the first identity, putting $x = 1$ and $y = 2$ proves that it is not satisfied
  by $\sylv$:
  \[
    \psylv{1222} = \tikz[microbst,baseline=(0)] \node (root) {$2$} child { node (0) {$2$} child { node (00) {$2$} child { node (000) {$1$} } child[missing] } child[missing] } child[missing];
    \neq \tikz[microbst,baseline=(0)] \node (root) {$2$} child { node (0) {$2$} child { node (00) {$1$} child[missing] child { node (001) {$2$} } } child[missing] } child[missing]; = \psylv{2122}
  \]
  Thus $xyxy = yxxy$ is the unique shortest identity satisfied by $\sylv$.
\end{proof}

Since the sylvester and $\#$-sylvester monoids are anti-isomorphic, the following results follow by dual reasoning:

\begin{lemma}
  \label{lem:sylvsharprightcanc}
  The $\#$-sylvester monoid is right-cancellative.
\end{lemma}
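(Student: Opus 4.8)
The plan is to exploit the anti-isomorphism between $\sylv$ and $\sylvsharp$ noted immediately above the statement, reducing right-cancellativity of $\sylvsharp$ to the left-cancellativity of $\sylv$ already established in \fullref{Lemma}{lem:sylvleftcanc}. Recall that an anti-isomorphism $\theta \colon \sylv \to \sylvsharp$ is a bijection satisfying $\theta(st) = \theta(t)\theta(s)$, and that its inverse $\theta^{-1}$ is then also an anti-isomorphism. The abstract principle driving the argument is that anti-isomorphisms interchange the two one-sided cancellation properties: any monoid anti-isomorphic to a left-cancellative monoid must be right-cancellative.

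Concretely, I would argue as follows. Suppose $pr = qr$ holds in $\sylvsharp$ for some $p,q,r \in \sylvsharp$; the goal is to deduce $p = q$. Applying $\theta^{-1}$ and using that it reverses products gives $\theta^{-1}(r)\theta^{-1}(p) = \theta^{-1}(r)\theta^{-1}(q)$ in $\sylv$. Since $\sylv$ is left-cancellative by \fullref{Lemma}{lem:sylvleftcanc}, the common left factor $\theta^{-1}(r)$ cancels, yielding $\theta^{-1}(p) = \theta^{-1}(q)$, and hence $p = q$ because $\theta^{-1}$ is injective. This establishes right-cancellativity of $\sylvsharp$.

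Alternatively, one can give a direct proof that mirror-images every step of \fullref{Lemma}{lem:sylvleftcanc}. Because $\sylvsharp$ is built from left strict binary search trees with left-to-right insertion (\fullref{Algorithm}{alg:sylvsharpinsertone}), the last symbol inserted when computing $\psylvsharp{w}$ is the \emph{rightmost} symbol of $w$. Thus, given words $u,v \in \aA^*$ and $a \in \aA$ with $\psylvsharp{ua} = \psylvsharp{va}$, the final symbol $a$ occupies a leaf node in each tree; deleting this leaf from the two equal trees leaves the equal trees $\psylvsharp{u}$ and $\psylvsharp{v}$, which is exactly the right-cancellativity statement.

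The only point requiring care in this direct approach is the dual of the structural remark preceding \fullref{Lemma}{lem:sylvleftcanc}: in a left strict binary search tree, any two nodes sharing a label lie on a single root-to-leaf path, with equal labels now accumulating in the right subtree rather than the left, so there is at most one leaf labelled $a$ and the deleted node is unambiguous. I expect this to be the main obstacle, though it is still routine, since it is precisely the place where the left/right asymmetry between the two tree conventions must be tracked correctly; once it is in hand, the deletion argument transfers verbatim from the sylvester case.
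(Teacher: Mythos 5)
Your proposal is correct and matches the paper's approach: the paper derives this lemma (without writing out details) precisely from the anti-isomorphism between $\sylv$ and $\sylvsharp$ together with \fullref{Lemma}{lem:sylvleftcanc}, which is exactly your primary argument. Your additional direct mirror-image proof is a sound, more explicit version of the same dual reasoning.
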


\begin{lemma}
  \label{lem:sylvsharpxxuxyv}
  The $\#$-sylvester monoid does not satisfy an identity equivalent to one of
the form $xxu = xyv$, for any $u,v\in\{x,y\}^*$.
\end{lemma}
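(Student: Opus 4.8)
The plan is to transfer \fullref{Lemma}{lem:sylvuxxvyx} across the anti-isomorphism between $\sylv$ and $\sylvsharp$, rather than repeating the tree-insertion analysis. Concretely, word reversal $\rev$ realises this anti-isomorphism at the level of congruences: one has $u \sylvsharpcong v$ if and only if $\rev(u) \sylvcong \rev(v)$, so that $[u]_{\sylvsharp} \mapsto [\rev(u)]_{\sylv}$ is a well-defined bijection $\theta \colon \sylvsharp \to \sylv$ with $\theta(ab) = \theta(b)\theta(a)$; equivalently, $\sylvsharp$ is isomorphic to $\sylv^{\mathrm{op}}$. I would take this as the concrete content of the anti-isomorphism asserted in the text.

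First I would record the general duality principle for identities: a monoid $M$ satisfies an identity $s = t$ (with $s,t$ words over a variable alphabet $X$) if and only if $M^{\mathrm{op}}$ satisfies the reversed identity $\rev(s) = \rev(t)$. The check is routine. Given a homomorphism $\phi \colon X^* \to \sylvsharp$, the assignment $x \mapsto \theta(\phi(x))$ extends to a homomorphism $\phi' \colon X^* \to \sylv$, and applying the anti-homomorphism $\theta$ to $\phi(s) = \phi(s_1)\cdots\phi(s_k)$ reverses the order of factors, giving $\theta(\phi(s)) = \phi'(\rev(s))$. Since $\theta$ is a bijection, $\phi(s) = \phi(t)$ holds if and only if $\phi'(\rev(s)) = \phi'(\rev(t))$, and as $\phi$ ranges over all homomorphisms into $\sylvsharp$ the map $\phi'$ ranges over all homomorphisms into $\sylv$. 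Hence $\sylvsharp$ satisfies $s = t$ exactly when $\sylv$ satisfies $\rev(s) = \rev(t)$.

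Next I would carry out the reversal on the specific forms. For any $u,v \in \set{x,y}^*$ one has $\rev(xxu) = \rev(u)\,xx$ and $\rev(xyv) = \rev(v)\,yx$, so the identity $xxu = xyv$ reverses to $\rev(u)\,xx = \rev(v)\,yx$, which is of the form $u'xx = v'yx$ with $u' = \rev(u)$ and $v' = \rev(v)$ again in $\set{x,y}^*$. The argument then runs by contradiction: if $\sylvsharp$ satisfied some identity equivalent to $xxu = xyv$, then, satisfaction of an identity being invariant under renaming variables and swapping the two sides, $\sylvsharp$ would satisfy $xxu = xyv$ itself; by the duality principle $\sylv$ would then satisfy $\rev(u)\,xx = \rev(v)\,yx$, contradicting \fullref{Lemma}{lem:sylvuxxvyx}.

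The work is essentially bookkeeping, and the one place a careless dualization could go wrong is the matching of shapes: I would check explicitly that reversing the \emph{prefix} pattern ($xx$ and $xy$ at the front) yields precisely the \emph{suffix} pattern ($xx$ and $yx$ at the back) governed by \fullref{Lemma}{lem:sylvuxxvyx}, paying attention to the flip of $xy$ into $yx$. I would also confirm that the equivalence relation on identities (renaming of variables together with swapping of sides) commutes with reversal, so that ``equivalent to one of the form $xxu = xyv$'' dualizes cleanly to ``equivalent to one of the form $u'xx = v'yx$'' and no case is lost in the transfer.
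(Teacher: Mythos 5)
Your overall strategy is exactly the paper's: the paper obtains this lemma from \fullref{Lemma}{lem:sylvuxxvyx} purely ``by dual reasoning'' via the anti-isomorphism between $\sylv$ and $\sylvsharp$, and your bookkeeping---reversing the prefix pattern $xxu=xyv$ into the suffix pattern $\rev(u)\,xx=\rev(v)\,yx$ and checking that equivalence of identities commutes with reversal---is correct. The gap is in the step you explicitly adopt as ``the concrete content of the anti-isomorphism'': plain word reversal does \emph{not} intertwine the two congruences. The tie-breaking conventions differ (in the right strict insertion a symbol equal to a node label descends to the left, in the left strict insertion it descends to the right), and this breaks the claimed equivalence $u \sylvsharpcong v \iff \rev(u)\sylvcong\rev(v)$. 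Concretely, $\psylvsharp{212}=\psylvsharp{221}$ (both are the tree with root $2$, left child $1$, right child $2$), yet $\psylv{\rev(212)}=\psylv{212}$ and $\psylv{\rev(221)}=\psylv{122}$ are distinct (in the first the second $2$ hangs below the $1$; in the second the $1$ hangs below the second $2$). So your map $\theta([u]_{\sylvsharpcong})=[\rev(u)]_{\sylvcong}$ is not even well defined; the opposite implication fails as well, e.g.\ $\psylv{121}=\psylv{211}$ while $\psylvsharp{121}\neq\psylvsharp{112}$.

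The repair is standard but must be said: the anti-isomorphism is the Sch\"{u}tzenberger-type involution $w \mapsto \rev(\bar{w})$, where $\bar{\cdot}$ is an order-\emph{reversing} relabelling of the letters; on each finite-rank pair one has $u \sylvsharpcong v \iff \rev(\bar{u}) \sylvcong \rev(\bar{v})$, and since any substitution of the two variables involves only finitely many letters of $\aA$, satisfaction of an identity in $\sylvsharp$ is equivalent to its satisfaction in every $\sylvsharp_n$. The alphabet complementation is absorbed into the choice of substitution, so your headline duality principle---$\sylvsharp$ satisfies $s=t$ if and only if $\sylv$ satisfies $\rev(s)=\rev(t)$---is true, and everything downstream of it in your argument goes through unchanged. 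But as written, the foundational claim is false and needs this correction (or the anti-isomorphism should simply be cited as a black box, as the paper does).
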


\begin{lemma}
  \label{lem:sylvsharpsameeval}
  Let $p,q,s \in \sylvsharp$ be such that $\evlit(p) = \evlit(q) = \evlit(s)$. Then $sp = sq$.
\end{lemma}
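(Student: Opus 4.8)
The plan is to obtain this lemma as the formal dual of \fullref{Lemma}{lem:sylvsameeval} via the anti-isomorphism between $\sylv$ and $\sylvsharp$. Concretely, I would use the anti-isomorphism $\theta : \sylv \to \sylvsharp$ induced by the word-reversal map $w \mapsto w^{\rev}$ on $\aA^*$. Reversing a word leaves the order in which symbols are inserted unchanged --- right-to-left insertion of $w$ and left-to-right insertion of $w^{\rev}$ process the letters $w_{\abs{w}}, \ldots, w_1$ in the same sequence --- while exchanging the right-strict rule of \fullref{Algorithm}{alg:sylvinsertone} for the left-strict rule of \fullref{Algorithm}{alg:sylvsharpinsertone}; this is exactly what makes reversal carry $\sylvcong$ to $\sylvsharpcong$ and hence descend to a well-defined anti-isomorphism $\theta$. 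Two properties of $\theta$ are all I need: it reverses products, $\theta(ab) = \theta(b)\theta(a)$, and it preserves evaluation, $\ev{\theta(p)} = \ev{p}$, the latter because $w$ and $w^{\rev}$ contain each symbol the same number of times.

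Granting these, the deduction is immediate. Given $p,q,s \in \sylvsharp$ with $\ev{p} = \ev{q} = \ev{s}$, put $p' = \theta^{-1}(p)$, $q' = \theta^{-1}(q)$, and $r' = \theta^{-1}(s)$ in $\sylv$; since $\theta^{-1}$ also preserves evaluation we have $\ev{p'} = \ev{q'} = \ev{r'}$, so \fullref{Lemma}{lem:sylvsameeval} yields $p'r' = q'r'$. Applying $\theta$ and using that it reverses products gives $\theta(r')\theta(p') = \theta(r')\theta(q')$, that is $sp = sq$, as required.

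The one place that genuinely demands care --- and thus the main obstacle, if one wants a self-contained argument rather than simply citing the anti-isomorphism asserted in the text --- is verifying that reversal really does intertwine the two insertion algorithms, i.e.\ that there is a bijection between right-strict and left-strict binary search trees of equal evaluation under which $\psylvsharp{w^{\rev}}$ corresponds to $\psylv{w}$ for every $w$. The subtlety is confined to repeated letters: for a word with distinct letters the two algorithms agree symbol-by-symbol and produce literally the same labelled tree, whereas equal letters are threaded down a right spine by the right-strict rule and down a left spine by the left-strict rule, so the bijection must reverse these runs of equal values. Once this correspondence is in hand the two needed properties of $\theta$ follow at once, and evaluation preservation in particular is transparent. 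An entirely routine alternative, avoiding $\theta$ altogether, is to transcribe the proof of \fullref{Lemma}{lem:sylvsameeval} with every occurrence of ``left'' and ``right'', and of ``$\geq$'' and ``$\leq$'', interchanged; this reproves the statement directly at the cost of repeating the earlier case analysis.
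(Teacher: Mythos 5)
Your high-level plan coincides with the paper's, which offers no written proof beyond invoking the anti-isomorphism between $\sylv$ and $\sylvsharp$ and declaring the result dual to \fullref{Lemma}{lem:sylvsameeval}. But the specific map you build the argument on does not exist: plain word reversal does \emph{not} carry $\sylvcong$ to $\sylvsharpcong$, so your $\theta$ is not well defined. You rightly flag repeated letters as the danger point, but the danger is fatal rather than delicate. Concretely, $121\sylvcong 211$ (this is the basic sylvester relation $acb\equiv cab$ with $a=b=1$, $c=2$; both words give the right strict tree with root $1$, left child $1$ and right child $2$), yet the reversed words $121$ and $112$ satisfy $\psylvsharp{121}\neq\psylvsharp{112}$: the first has the final $1$ as the left child of the node $2$, while the second is a chain of right children $1,1,2$. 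So $121\not\sylvsharpcong 112$ and the correspondence $[w]_{\sylvcong}\mapsto[w^{\rev}]_{\sylvsharpcong}$ is not a function; no bijection of trees of the kind you posit in your last paragraph can exist. The root cause is that the two insertion rules are forced by the strict/non-strict asymmetry in the two tree definitions to break ties in \emph{opposite} directions, and reversal alone does nothing to compensate for that.

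The repair --- and the reason for the name $\sylvsharp$ --- is to compose reversal with the order-reversing involution $i\mapsto n+1-i$ of $\aA_n$ (the Sch\"{u}tzenberger involution $\#$). This does match the defining relations of the two congruences and yields an anti-isomorphism $\sylv_n\to\sylvsharp_n$ for each finite $n$, which suffices here because any given $p,q,s$ involve only finitely many letters. Note that this map permutes rather than preserves evaluation (so your claim that $\theta$ fixes $\ev{\cdot}$ should itself have raised suspicion), but it does preserve the hypothesis $\ev{p}=\ev{q}=\ev{s}$, after which your deduction from \fullref{Lemma}{lem:sylvsameeval} goes through verbatim. Alternatively, the fallback in your final sentence --- rerunning the proof of \fullref{Lemma}{lem:sylvsameeval} with left/right and the inequalities dualized --- is sound, and is really what ``dual reasoning'' has to mean here.
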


\begin{proposition}
  \label{prop:sylvsharpiden}
  The $\#$-sylvester monoid satisfies the non-trivial identity $yxyx = yxxy$. Furthermore, up to equivalence, this is the unique shortest
  identity satisfied by the $\#$-sylvester monoid.
\end{proposition}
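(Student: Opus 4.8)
The plan is to deduce the entire statement from the already-proved sylvester case by exploiting the stated anti-isomorphism between $\sylv$ and $\sylvsharp$, so that essentially no new computation is required. The organising observation is that word-reversal interacts correctly with an anti-isomorphism. Concretely, if $\phi : \sylv \to \sylvsharp$ is an anti-isomorphism and $\theta : \set{x,y}^* \to \sylvsharp$ is any homomorphism, then setting $\psi(z) = \phi^{-1}(\theta(z))$ for each variable $z$ extends to a homomorphism $\psi : \set{x,y}^* \to \sylv$, and for every word $w = w_1 \cdots w_n$ one has $\phi(\psi(w)) = \theta(\rev(w))$, where $\rev(w) = w_n \cdots w_1$; this is immediate from the fact that $\phi$ turns products around.

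First I would record the resulting reversal principle precisely: a monoid $M$ satisfies an identity $u = v$ if and only if any monoid anti-isomorphic to $M$ satisfies $\rev(u) = \rev(v)$ (the ``if and only if'' holds because the inverse of an anti-isomorphism is again an anti-isomorphism). Applying this to \fullref{Proposition}{prop:sylviden}, since $\sylv$ satisfies $xyxy = yxxy$ and $\rev(xyxy) = yxyx$ while $\rev(yxxy) = yxxy$, it follows at once that $\sylvsharp$ satisfies $yxyx = yxxy$, which is the first assertion.

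For uniqueness and minimality, I would observe that reversal is a length-preserving involution on identities that commutes both with renaming of variables and with swapping the two sides of an equality; hence it descends to a length-preserving bijection between equivalence classes of identities, carrying those satisfied by $\sylv$ bijectively onto those satisfied by $\sylvsharp$. Since $xyxy = yxxy$ is, up to equivalence, the unique shortest identity satisfied by $\sylv$, its reversal $yxyx = yxxy$ is the unique shortest identity satisfied by $\sylvsharp$, up to equivalence.

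I do not expect a genuine obstacle here: all of the real content is contained in \fullref{Proposition}{prop:sylviden}, and the only points needing care are the compatibility of reversal with the anti-isomorphism and with the equivalence relation on identities, both of which are routine. As an alternative that avoids the reversal formalism, one could instead reprove the statement directly, mirroring the argument of \fullref{Proposition}{prop:sylviden} line by line but substituting the already-stated dual facts \fullref{Lemmata}{lem:sylvsharprightcanc}, \ref{lem:sylvsharpxxuxyv}, and \ref{lem:sylvsharpsameeval} for their sylvester counterparts; in that route the only adjustment is to note that it is now the \emph{leftmost} symbol of a word that determines the root of the left strict binary search tree, so the case analysis is driven by $u_1 = v_1$ together with right-cancellativity rather than by the last symbols and left-cancellativity.
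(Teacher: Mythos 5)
Your proposal is correct and matches the paper's approach: the paper derives this proposition (and the accompanying dual lemmas) precisely by invoking the anti-isomorphism between $\sylv$ and $\sylvsharp$ and appealing to ``dual reasoning,'' which is exactly the reversal principle you spell out. Your version is simply a more explicit writing-out of that one-line argument, including the correct observation that $\rev(xyxy)=yxyx$ and $\rev(yxxy)=yxxy$, and that reversal preserves length and equivalence of identities, so uniqueness transfers as well.
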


\subsection{Baxter monoid}
\label{subsec:baxtiden}

\begin{lemma}
  \label{lem:baxtsameeval}
  Let $p,q,r,s \in \baxt$ be such that $\ev{p} = \ev{q} = \ev{r} = \ev{s}$. Then $spr = sqr$.
\end{lemma}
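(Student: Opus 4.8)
The plan is to exploit the fact that, by the very definition of the Baxter monoid, two elements coincide precisely when their images in both the $\#$-sylvester and the sylvester monoids coincide. Indeed, since $\pbaxt{u} = \parens[\big]{\psylvsharp{u}, \psylv{u}}$, the congruence $\equiv_\baxt$ is the intersection of $\equiv_\sylvsharp$ and $\equiv_\sylv$, so the natural map sending $[u]_\baxt$ to $\parens[\big]{[u]_\sylvsharp, [u]_\sylv}$ embeds $\baxt$ into the direct product $\sylvsharp \times \sylv$, with the two coordinates given by monoid homomorphisms $\baxt \to \sylvsharp$ and $\baxt \to \sylv$. Consequently, to prove $spr = sqr$ in $\baxt$ it suffices to verify this equality separately after projecting into each of the two factors. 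Since these projections preserve evaluation, the hypothesis $\ev{p} = \ev{q} = \ev{r} = \ev{s}$ descends to equal evaluations for the images of $p,q,r,s$ in each factor.

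For the sylvester coordinate I would apply \fullref{Lemma}{lem:sylvsameeval} to (the images of) $p,q,r$, whose evaluations agree, obtaining $pr = qr$ in $\sylv$; left-multiplying by $s$ then gives $spr = sqr$ in $\sylv$. Dually, for the $\#$-sylvester coordinate I would apply \fullref{Lemma}{lem:sylvsharpsameeval} to $s,p,q$, again with equal evaluations, obtaining $sp = sq$ in $\sylvsharp$; right-multiplying by $r$ then gives $spr = sqr$ in $\sylvsharp$. As $spr$ and $sqr$ have equal images in both $\sylv$ and $\sylvsharp$, the embedding forces them to be equal in $\baxt$, which is the desired conclusion.

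The one point requiring care — and really the only idea in the argument — is keeping track of the \emph{sides}: the sylvester lemma produces a right-hand common factor ($pr = qr$), whereas the $\#$-sylvester lemma produces a left-hand common factor ($sp = sq$). These mesh exactly for the identity $spr = sqr$, because the difference between the two words sits to the right of $s$ (handled in the sylvester factor by left-multiplying by $s$) and to the left of $r$ (handled in the $\#$-sylvester factor by right-multiplying by $r$). Having all four evaluations equal is precisely what allows both one-sided lemmas to fire at once, so beyond invoking \fullref{Lemmata}{lem:sylvsameeval} and \ref{lem:sylvsharpsameeval} together with the subdirect description of $\baxt$, no further computation is needed.
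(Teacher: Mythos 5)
Your proof is correct and follows essentially the same route as the paper: verify the equality separately in the sylvester and $\#$-sylvester coordinates and recombine via the definition of $\pbaxt{\cdot}$. The only (harmless) difference is that you apply the one-sided lemmas to $p,q,r$ and to $s,p,q$ and then multiply by the remaining factor, whereas the paper applies them directly to the products $sp,sq,r$ and $pr,qr,s$; your version actually matches the stated hypotheses of \fullref{Lemmata}{lem:sylvsameeval} and \ref{lem:sylvsharpsameeval} more literally, since all the evaluations involved are genuinely equal.
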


\begin{proof}
  Let $t,u,v,w \in \aA^*$ represent $p,q,r,s$, respectively.

  Let $t' = wt$ and $u' = wu$; note that $\ev{t'} =
  \ev{u'}$. By \fullref{Lemma}{lem:sylvsameeval} applied to the elements $\psylv{t'},\psylv{u'},\psylv{v}$, it follows
  that $\psylv{t'v} = \psylv{u'v}$; thus $\psylv{wtv} = \psylv{wuv}$

  Let $t'' = tv$ and $u'' = uv$; note that $\ev{t''} =
  \ev{u''}$. By \fullref{Lemma}{lem:sylvsharpsameeval} applied to the elements $\psylvsharp{t'},\psylvsharp{u'},\psylvsharp{w}$, it follows
  that $\psylvsharp{wt''} = \psylvsharp{wu''}$; thus $\psylvsharp{wtv} = \psylvsharp{wuv}$.

  Hence
  \begin{align*}
    spr = \pbaxt{wuv} &= \parens[\big]{\psylvsharp{wtv},\psylv{wtv}} \\
    &= \parens[\big]{\psylvsharp{wuv},\psylv{wuv}} = \pbaxt{wuv} = sqp. \qedhere
  \end{align*}
\end{proof}

\begin{proposition}
  \label{prop:baxtiden}
  The Baxter monoid satisfies the identities
  \[
    yxxyxy = yxyxxy\text{ and }xyxyxy = xyyxxy.
  \]
  Furthermore, up to equivalence, these are the unique shortest non-trivial identities satisfied by the Baxter monoid.
\end{proposition}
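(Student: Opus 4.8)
The plan is to get the two identities for free from \fullref{Lemma}{lem:baxtsameeval}, and then to prove minimality and uniqueness by pushing everything through the two coordinate projections of $\pbaxt{w} = (\psylvsharp{w},\psylv{w})$, which exhibit $\sylvsharp$ and $\sylv$ as homomorphic images of $\baxt$. Consequently any identity satisfied by $\baxt$ is satisfied by both $\sylv$ and $\sylvsharp$, so I may freely use everything already proved for those two monoids.

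For the positive part I would apply \fullref{Lemma}{lem:baxtsameeval} with $p = r = xy$, $q = yx$ and $s \in \set{xy,yx}$. Since $xy$ and $yx$ have the same evaluation, $\ev{s} = \ev{p} = \ev{q} = \ev{r}$, so the lemma gives $spr = sqr$, i.e.\ $s\,(xy)(xy) = s\,(yx)(xy)$. Taking $s = xy$ yields $xyxyxy = xyyxxy$ and $s = yx$ yields $yxxyxy = yxyxxy$; both target identities are thus instances of the single scheme $s\,(xy)(xy)=s\,(yx)(xy)$.

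The engine of the converse is the claim that \emph{in every non-trivial identity $u=v$ satisfied by $\baxt$ the first two letters of $u$ and $v$ agree, and the last two letters agree}. The outer letters are immediate: the root of $\psylvsharp{w}$ is the first letter of $w$ and the root of $\psylv{w}$ is the last letter, so if $u_1\ne v_1$ a substitution sending these two distinct variables to $1$ and $2$ makes the roots of the $\psylvsharp$-trees differ, and dually for the last letters via $\psylv$. For the second letter, suppose $u_1=v_1=a$ but $u_2\ne v_2$; after possibly swapping $u$ and $v$ I may assume $u_2=a$ and $v_2=b$, the other variable. Substituting $a\mapsto 1$, $b\mapsto 2$ makes $\phi(u)$ begin $11$ and $\phi(v)$ begin $12$, so in $\psylvsharp$ the root $1$ receives as its right child the second inserted symbol — a $1$ in the first case, a $2$ in the second — and this child is never displaced by the remaining insertions; the two trees therefore differ, contradicting that $\sylvsharp$ satisfies $u=v$. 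The statement for the last two letters is dual, using $\psylv$ and right-to-left insertion. This claim forces the positions $\set{1,2}\cup\set{|u|-1,|u|}$ to agree: for $|u|\le 4$ these are all positions, and for $|u|=5$ only the middle position remains, which \fullref{Lemma}{lem:lengths} pins down as well. Hence $\baxt$ satisfies no non-trivial identity of length at most $5$.

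It then remains to classify the length-$6$ identities. By the claim $u$ and $v$ agree in positions $1,2,5,6$; by \fullref{Lemma}{lem:lengths} they share an evaluation, so non-triviality makes positions $3,4$ a transposition, and every candidate has the form $u_1u_2\,xy\,u_5u_6 = u_1u_2\,yx\,u_5u_6$. To decide which are actually satisfied I would use cancellativity: deleting the common prefix $u_1u_2$ and invoking left-cancellativity of $\sylv$ (\fullref{Lemma}{lem:sylvleftcanc}) shows $\sylv$ satisfies the length-$4$ identity $xy\,u_5u_6 = yx\,u_5u_6$, which by \fullref{Proposition}{prop:sylviden} must be equivalent to $xyxy=yxxy$ — and this happens exactly when $u_5\ne u_6$. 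Dually, right-cancellativity of $\sylvsharp$ (\fullref{Lemma}{lem:sylvsharprightcanc}) with \fullref{Proposition}{prop:sylvsharpiden} forces $u_1\ne u_2$. Conversely, when $u_1\ne u_2$ and $u_5\ne u_6$ both length-$4$ relations hold, and prepending $u_1u_2$ in $\sylv$ and appending $u_5u_6$ in $\sylvsharp$ (using that these relations are congruences) shows $\baxt$ satisfies the identity. The four surviving identities, corresponding to $(u_1u_2,u_5u_6)\in\set{xy,yx}^2$, collapse under renaming $x\leftrightarrow y$ and swapping sides into exactly the two equivalence classes in the statement.

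I expect the delicate step to be the second-letter part of the claim: one must verify that the child of the root created by the second insertion is a genuine invariant of the finished tree, i.e.\ that no later insertion ever replaces it. This hinges on getting right, in each of the two kinds of binary search tree, the convention for where \emph{equal} letters are routed, and on choosing the separating substitution accordingly.
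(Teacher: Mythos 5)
Your proposal is correct and follows essentially the same route as the paper: the positive part via \fullref{Lemma}{lem:baxtsameeval} with $p=r=xy$, $q=yx$, $s\in\set{xy,yx}$, and the converse by projecting to $\sylv$ and $\sylvsharp$, pinning down the two outermost positions at each end (your inline second-letter argument is exactly the content of \fullref{Lemmata}{lem:sylvuxxvyx} and \ref{lem:sylvsharpxxuxyv}, and your worry about where equal letters are routed is resolved by the definitions, which force equal symbols left in a right strict tree and right in a left strict tree), then combining cancellativity with \fullref{Propositions}{prop:sylviden} and \ref{prop:sylvsharpiden}. Your handling of lengths at most $5$ --- four agreeing positions plus the evaluation condition of \fullref{Lemma}{lem:lengths} --- is a mild streamlining of the paper's separate length-$5$ case analysis, not a genuinely different method.
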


\begin{proof}
  Let $x,y \in \baxt$. Let $p = r = xy$ and $q = s = yx$. Then $\ev{p} = \ev{q} = \ev{r} = \ev{s}$. Thus, by
  \fullref{Lemma}{lem:baxtsameeval}, $yxxyxy = spr = sqr = yxyxxy$. The same
reasoning with $s = xy$ shows that
  $xyxyxy = xyyxxy$.

  The next step is to show that, up to equivalence, these are the only identities of length $6$ satisfied by $\baxt$. So
  suppose that $u = v$ is an identity of length $6$ satisfied by $\baxt$, with $u = u_1u_2\cdots u_6$ and
  $v = v_1v_2\cdots v_6$, where $u_i,v_i \in \set{x,y}$.  Since the first symbol in a word $w$ determines the root
  symbol of the left-hand tree in the pair $\pbaxt{u}$, and since the last symbol determines the root symbol of the
  right-hand tree, it follows that $u_1$ and $v_1$ must be the same variable, and that $u_6$ and $v_6$ must be the same
  variable. By \fullref{Lemmata}{lem:sylvuxxvyx} and \ref{lem:sylvsharpxxuxyv}, $u_2$ and $v_2$ must be the same
  variable, and $u_5$ and $v_5$ must be the same variable. Since $\sylv$ and $\sylvsharp$ are both homomorphic images of
  $\baxt$ \cite[Proposition~3.7]{giraudo_baxter2}, the identity $u=v$ is also satified by $\sylv$ and
  $\sylvsharp$. Since $\sylv$ is left-cancellative by \fullref{Lemma}{lem:sylvleftcanc}, $u_3u_4u_5u_6 = v_3v_4v_5v_6$
  is satisfied by $\sylv$ and is thus equivalent to $xyxy = yxxy$. The aim is to characterize $u = v$ up to equivalence,
  so assume that $u_3u_4u_5u_6 = v_3v_4v_5v_6$ actually is the identity $xyxy = yxxy$.

  Since $\sylvsharp$ is right-cancellative by \fullref{Lemma}{lem:sylvsharprightcanc}, $u_1u_2u_3u_4 = v_1v_2v_3v_4$ is
  satisfied by $\sylvsharp$ and is thus equivalent to $yxyx = yxxy$, which is equivalent (by swapping the two sides) to
  $yxxy = yxyx$ and (by interchaning $x$ and $y$) to $xyxy = xyyx$. Combining these with $u_3u_4u_5u_6 = v_3v_4v_5v_6$
  from the previous paragraph shows that $u = v$ is (up to equivalence) either $yxxyxy = yxyxxy$ or $xyxyxy = xyyxxy$.

  Finally, it is necessary to show that no length-$5$ identity is satisfied by $\baxt$. Suppose $u = v$ is an identity
  of length $5$ satisfied by the Baxter monoid. Suppose that $u = u_1u_2\cdots u_5$ and $v = v_1v_2\cdots v_5$, where
  $u_i,v_i \in \set{x,y}$. As before, considering the root symbols of the left-hand and right-hand trees in $\pbaxt{w}$
  shows that $u_1$ and $v_1$ must be the same symbol, and that $u_5$ and $v_5$ must be the same symbol.

  Since $\sylv$ and $\sylvsharp$ are both homomorphic images of $\baxt$ \cite[Proposition~3.7]{giraudo_baxter2}, both of
  them satisfy the identity $u = v$. Furthermore, $\sylvsharp$ is right-cancellative and so satisfies the identity
  $u_1u_2u_3u_4 = v_1v_2v_3v_4$; while $\sylv$ is left-cancellative and so satisfies the identity
  $u_2u_3u_4u_5 = v_2v_3v_4v_5$. By \fullref{Proposition}{prop:sylviden}, the unique length-$4$ identity satisfied by
  $\sylv$ is $xyxy = yxxy$, so the identity $u = v$ is either $xxyxy = xyxxy$ or $yxyxy = yyxxy$. Deleting the rightmost
  symbol $y$ from each of these identities yields $xxyx = xyxx$ and $yxyx = yyxx$, one of which must be the identity
  $u_1u_2u_3u_4 = v_1v_2v_3v_4$ satisfied by $\sylvsharp$. This is a contradiction, since by
  \fullref{Proposition}{prop:sylvsharpiden} the only length-$4$ identity satisfied by $\sylvsharp$ is $yxyx = yxxy$.
\end{proof}

\subsection{Left and right patience sorting monoids}
\label{subsec:lpsrpsiden}

The present authors and Silva \cite[\S~4.2]{cms_patience1} have shown that the
elements $\plps{21}$ and $\plps{1}$
generate a free submonoid of $\lps$.  Since free monoids of rank at least $2$
satisfy no non-trivial identities, it follows that $\lps$ does not
satisfy a non-trivial identity. Furthermore, $\lps_n$ for $n \geq 2$
satisfies no non-trivial identities, while $\lps_1$, as a monogenic monoid, is of course commutative and satisfies the
identity $xy = yx$.

For any $n$, the monoid $\rps_n$ satisfies the identity $(xy)^{n+1}=(xy)^nyx$ but does not satisfy any non-trivial
identity of length less than or equal to $n$ \cite[\S~4.2]{cms_patience1}, which immediately implies that $\rps$ does not
satisfy any non-trivial identity.

\subsection{Plactic monoid}
\label{subsec:placiden}

Recently, the present authors together with Klein, Kubat, and Okni\'{n}ski used the combinatorics of Young tableaux to
prove that $\plac_n$ does not satisfy any identity of length less than or equal to $n$ \cite[Proposition~3.1]{ckkmo_placticidentity},
which implies that $\plac$ does not satisfy a non-trivial identity.

For finite-rank plactic monoids, some more partial results are known. First, $\plac_1$ is monogenic and thus commutative and
satisfies $xy = yx$. Kubat \& Okni\'{n}ski \cite{kubat_identities} have shown that $\plac_2$ satisfies Adian's identity
$xyyxxyxyyx = xyyxyxxyyx$, and that $\plac_3$ satisfies the identity $pqqpqp = pqpqqp$, where $p(x,y)$ and $q(x,y)$ are
respectively the left and right side of Adian's identity (and so the identity $pqqpqp = pqpqqp$ has sixty variables $x$
or $y$ on each side). Furthermore, $\plac_3$ does not satisfy Adian's identity \cite[p.~111--2]{kubat_identities}.

Furthermore $\plac_3$ satisfies the identity $pqppq = pqqpq$, where as above $p(x,y)$ and $q(x,y)$ are respectively the
left and right side of Adian's identity \cite[Corollary~5.4]{ckkmo_placticidentity}. The proof technique was rather
different, via an embedding of $\plac_3$ into the direct product of two copies of the monoid of $3\times 3$
upper-triangular tropical matrices, which in turn is shown to satisfy the given identity.

\begin{conjecture}
  For each $n \geq 4$, there is a non-trivial identity satisfied by $\plac_n$.
\end{conjecture}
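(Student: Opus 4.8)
The plan is to resolve the conjecture not by producing identities directly, but by embedding each finite-rank plactic monoid into a monoid that is \emph{already} known to satisfy a non-trivial identity, and then invoking the fact recorded earlier that a submonoid of a monoid satisfying an identity satisfies the same identity. The natural target is a tropical matrix monoid: \fullref{Subsection}{subsec:placiden} already notes that $\plac_3$ embeds into a direct product of two copies of the monoid of $3 \times 3$ upper-triangular tropical matrices \cite{ckkmo_placticidentity}, and the goal is to produce such an embedding for every $n$, in particular for $n \geq 4$.

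First I would fix the tropical semiring $\tset = \parens{\rset \cup \set{-\infty}, \max, +}$ and work with the monoid $\mathrm{UT}_N(\tset)$ of $N \times N$ upper-triangular matrices over $\tset$ under tropical multiplication. Two facts about these monoids would be assembled. The first is that $\mathrm{UT}_N(\tset)$ satisfies a non-trivial identity for every $N$; this belongs to the theory of tropical matrix semigroups and can be taken as known. The second is purely formal: a finite direct product $\prod_i \mathrm{UT}_{N_i}(\tset)$ embeds block-diagonally into $\mathrm{UT}_{\sum_i N_i}(\tset)$, since block-diagonal matrices with upper-triangular blocks are upper-triangular and multiply block-wise. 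This lets me build a representation as a direct sum of smaller pieces, landing in a single upper-triangular tropical matrix monoid, without worrying about whether identities survive products.

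The heart of the argument is the construction of a \emph{faithful} homomorphism $\rho_n : \plac_n \to \mathrm{UT}_{N(n)}(\tset)$. I would define $\rho_n$ on the generators $\aA_n$ by explicit upper-triangular tropical matrices chosen so that the Knuth relations hold among their images, so that $\rho_n$ descends from $\aA_n^*$ to $\plac_n$. The guiding principle for the choice is Greene's theorem: the shape of $\P{w}$ is computed from maxima of total lengths of families of disjoint weakly increasing subsequences of $w$, and such ``max of sums'' quantities are exactly what tropical matrix products evaluate. Concretely I would take $\rho_n$ to be a direct sum of blocks, one for each interval subalphabet $\set{i, i+1, \ldots, j}$ of $\aA_n$, each block arranged so that the tropical entries of $\rho_n(w)$ record the Greene invariants of the restriction of $w$ to that interval. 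Since the plactic class of $w$ should be determined by the Greene invariants of all its interval restrictions together with its content, recovering these invariants from $\rho_n(w)$ would force $\rho_n$ to separate distinct plactic classes, giving injectivity; faithfulness then yields, for every $n$, a non-trivial — indeed \emph{explicit} — identity for $\plac_n$ inherited from $\mathrm{UT}_{N(n)}(\tset)$.

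The step I expect to be the main obstacle is precisely this faithfulness claim. Writing down matrices that satisfy the Knuth relations is comparatively routine, and the existence of a non-trivial identity for $\mathrm{UT}_N(\tset)$ is available off the shelf; but proving that the tropical data in $\rho_n(w)$ genuinely reconstructs the whole tableau $\P{w}$, rather than merely its shape, requires a careful combinatorial analysis of how Greene invariants on overlapping interval subalphabets determine a semistandard tableau, together with a verification that the chosen matrices compute exactly those invariants. A direct combinatorial alternative — extending the \emph{ad hoc} identities known for $\plac_2$ and $\plac_3$ — seems far harder to control as $n$ grows, which is why I would commit to the tropical route.
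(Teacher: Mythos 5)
The first thing to note is that the paper does not prove this statement: it is posed as an open conjecture, so there is no proof of record to compare yours against. Your proposal must therefore be judged as a proposed resolution, and as such it is a programme rather than a proof. The direction is sensible and consistent with the paper's own remarks (the embedding of $\plac_3$ into a direct product of two copies of the $3\times 3$ upper-triangular tropical matrix monoid from \cite{ckkmo_placticidentity} is exactly the $n=3$ instance of what you are after), and the observation that a finite direct product of upper-triangular tropical matrix monoids embeds block-diagonally into a single larger one is correct and genuinely useful. You are also right that faithfulness is essential and not a formality: a non-injective homomorphism $\rho_n$ would only show that the \emph{image} of $\plac_n$ satisfies an identity, which says nothing about $\plac_n$ itself, since identities pass to submonoids and homomorphic images but are not reflected back along quotient maps.

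The gaps are precisely the two steps you flag and then defer. First, the claim that $\mathrm{UT}_N(\tset)$ satisfies a non-trivial identity for \emph{every} $N$ cannot simply be taken off the shelf relative to this paper: at the level of what is discussed and cited here, such identities are only available for small $N$, and establishing them for all $N$ is itself a substantial theorem that your argument silently presupposes. Second, and more seriously, the faithfulness argument is entirely schematic: you would need to (a) prove that the plactic class of $w \in \aA_n^*$ is determined by the Greene invariants of its restrictions to interval subalphabets together with its content, and (b) exhibit explicit upper-triangular tropical matrices whose products compute exactly those invariants while also satisfying the Knuth relations, so that $\rho_n$ descends to $\plac_n$ at all. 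Neither is routine: (a) is a genuine combinatorial theorem about semistandard Young tableaux, and (b) requires matching the ``maximum of total lengths of disjoint weakly increasing subsequences'' in Greene's theorem with the ``maximum over paths'' semantics of tropical matrix products in a way that is compatible with concatenation of words. As written, your proposal correctly identifies the obstacles but does not overcome either of them, so on the strength of what you have submitted the conjecture remains open.
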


\bibliography{\jobname}

\newcommand{\etalchar}[1]{$^{#1}$}
\begin{thebibliography}{CEK{\etalchar{+}}01}

\bibitem[Adi66]{adian_defining}
S.~Adian.
\newblock `{D}efining relations and algorithmic problems for groups and
  semigroups'.
\newblock {\em Trudy Mat. Inst. Steklov}, 85 (1966), pp. 3--123.
\newblock {\sc url:}
  \href{http://www.mathnet.ru/eng/tm2766}{\nolinkurl{http://www.mathnet.ru/eng/tm2766}}.

\bibitem[CEK{\etalchar{+}}01]{cassaigne_chinese}
J.~Cassaigne, M.~Espie, D.~Krob, J.-C. Novelli, \& F.~Hivert.
\newblock `{T}he {C}hinese {M}onoid'.
\newblock {\em Int. J. Algebra Comput.}, 11, no.~3 (2001), pp. 301--334.
\newblock {\sc doi:} \href {http://dx.doi.org/10.1142/S0218196701000425}
  {{10.1142/S0218196701000425}}.

\bibitem[CKK{\etalchar{+}}]{ckkmo_placticidentity}
A.~J. Cain, G.~Klein, {\L}.~Kubat, A.~Malheiro, \& J.~Okni{\'{n}}ski.
\newblock `A note on identities in plactic monoids and monoids of
  upper-triangular tropical matrices'.
\newblock arXiv:~\href {http://arxiv.org/abs/1705.04596} {{1705.04596}}.

\bibitem[CM17]{cm_hypoplactic}
A.~J. Cain \& A.~Malheiro.
\newblock `Crystallizing the hypoplactic monoid: from quasi-{K}ashiwara
  operators to the {R}obinson--{S}chensted--{K}nuth-type correspondence for
  quasi-ribbon tableaux'.
\newblock {\em J. Algebraic Combin.}, 45, no.~2 (2017), pp. 475--524.
\newblock {\sc doi:} \href {http://dx.doi.org/10.1007/s10801-016-0714-6}
  {{10.1007/s10801-016-0714-6}}.

\bibitem[CMS]{cms_patience1}
A.~J. Cain, A.~Malheiro, \& F.~Silva.
\newblock `The monoids of the patience sorting algorithm'.
\newblock {\em Internat. J. Algebra Comput.}
\newblock arXiv:~\href {http://arxiv.org/abs/1706.06884} {{1706.06884}}.

\bibitem[DK94]{duchamp_placticgrowth}
G.~Duchamp \& D.~Krob.
\newblock `{P}lactic-growth-like monoids'.
\newblock In M.~Ito \& H.~J{\"{u}}rgensen, eds, {\em {W}ords, {L}anguages and
  {C}ombinatorics, {I}{I}}, p. 124{\textendash }142, River Edge, NJ, 1994.
  World Scientific.

\bibitem[Ful97]{fulton_young}
W.~Fulton.
\newblock {\em {Y}oung {T}ableaux: {W}ith {A}pplications to {R}epresentation
  {T}heory and {G}eometry}.
\newblock No.~35 in {\em LMS Student Texts}. Cambridge University Press, 1997.

\bibitem[Gir12]{giraudo_baxter2}
S.~Giraudo.
\newblock `{A}lgebraic and combinatorial structures on pairs of twin binary
  trees'.
\newblock {\em J. Algebra}, 360 (2012), pp. 115--157.
\newblock {\sc doi:} \href {http://dx.doi.org/10.1016/j.jalgebra.2012.03.020}
  {{10.1016/j.jalgebra.2012.03.020}}.

\bibitem[Gro81]{gromov_growth}
M.~Gromov.
\newblock `{G}roups of polynomial growth and expanding maps'.
\newblock {\em Publ. Math. Inst. Hautes {\'{E}}tudes Sci.}, 53 (1981), pp.
  53--78.
\newblock {\sc url:}
  \href{http://www.numdam.org/item?id=PMIHES\_1981\_\_53\_\_53\_0}{\nolinkurl{http://www.numdam.org/item?id=PMIHES\_1981\_\_53\_\_53\_0}}.

\bibitem[HNT05]{hivert_sylvester}
F.~Hivert, J.-C. Novelli, \& J.-Y. Thibon.
\newblock `{T}he algebra of binary search trees'.
\newblock {\em Theoret. Comput. Sci.}, 339, no.~1 (2005), pp. 129--165.
\newblock {\sc doi:} \href {http://dx.doi.org/10.1016/j.tcs.2005.01.012}
  {{10.1016/j.tcs.2005.01.012}}.

\bibitem[HNT07]{hivert_commutative}
F.~Hivert, J.-C. Novelli, \& J.-Y. Thibon.
\newblock `{C}ommutative combinatorial {H}opf algebras'.
\newblock {\em Journal of Algebraic Combinatorics}, 28, no.~1 (2007), pp.
  65--95.
\newblock {\sc doi:} \href {http://dx.doi.org/10.1007/s10801-007-0077-0}
  {{10.1007/s10801-007-0077-0}}.

\bibitem[Jed11]{jedrzejewski_plactic}
F.~Jedrzejewski.
\newblock `{P}lactic classification of modes'.
\newblock In C.~Agon, M.~Andreatta, G.~Assayag, E.~Amiot, J.~Bresson, \&
  J.~Mandereau, eds, {\em {M}athematics and {C}omputation in {M}usic}, no. 6726
  in {\em Lecture Notes in Comput. Sci.}, pp. 350--353. Springer, 2011.
\newblock {\sc doi:} \href {http://dx.doi.org/10.1007/978-3-642-21590-2\_31}
  {{10.1007/978-3-642-21590-2\_31}}.

\bibitem[JO11]{jaszunska_chinese}
J.~Jaszu{\'{n}}ska \& J.~Okni{\'{n}}ski.
\newblock `{S}tructure of {C}hinese algebras'.
\newblock {\em J. Algebra}, 346, no.~1 (2011), pp. 31--81.
\newblock {\sc doi:} \href {http://dx.doi.org/10.1016/j.jalgebra.2011.08.020}
  {{10.1016/j.jalgebra.2011.08.020}}.

\bibitem[KO15]{kubat_identities}
{\L }.~Kubat \& J.~Okni{\'{n}}ski.
\newblock `{I}dentities of the plactic monoid'.
\newblock {\em Semigroup Forum}, 90, no.~1 (2015), pp. 100--112.
\newblock {\sc doi:} \href {http://dx.doi.org/10.1007/s00233-014-9609-9}
  {{10.1007/s00233-014-9609-9}}.

\bibitem[KT97]{krob_noncommutative4}
D.~Krob \& J.-Y. Thibon.
\newblock `{N}oncommutative {S}ymmetric {F}unctions {I}{V}: {Q}uantum {L}inear
  {G}roups and {H}ecke {A}lgebras at {$q=0$}'.
\newblock {\em Journal of Algebraic Combinatorics}, 6, no.~4 (1997), pp.
  339--376.
\newblock {\sc doi:} \href {http://dx.doi.org/10.1023/A:1008673127310}
  {{10.1023/A:1008673127310}}.

\bibitem[Lot02]{lothaire_algebraic}
M.~Lothaire.
\newblock {\em {A}lgebraic {C}ombinatorics on {W}ords}.
\newblock No.~90 in {\em Encyclopedia of Mathematics and its Applications}.
  Cambridge University Press, 2002.

\bibitem[LS78]{lascoux_foulkes}
A.~Lascoux \& M.-P. Sch{\"u}tzenberger.
\newblock `Sur une conjecture de {H}. {O}. {F}oulkes'.
\newblock {\em C. R. Acad. Sci. Paris S\'er. A-B}, 286, no.~7 (1978), pp.
  A323--A324.

\bibitem[LS81]{lascoux_plaxique}
A.~Lascoux \& M.-P. Sch{\"{u}}tzenberger.
\newblock `{L}e mono{\"{\i}}de plaxique'.
\newblock In {\em {N}oncommutative structures in algebra and geometric
  combinatorics}, no. 109 in {\em Quaderni de "La Ricerca Scientifica"}, pp.
  129--156, Rome, 1981. CNR.
\newblock {\sc url:}
  \href{http://igm.univ-mlv.fr/~berstel/Mps/Travaux/A/1981-1PlaxiqueNaples.pdf}{\nolinkurl{http://igm.univ-mlv.fr/~berstel/Mps/Travaux/A/1981-1PlaxiqueNaples.pdf}}.

\bibitem[LS85]{lascoux_schubert}
A.~Lascoux \& M.-P. Sch{\"u}tzenberger.
\newblock `Schubert polynomials and the {L}ittlewood-{R}ichardson rule'.
\newblock {\em Lett. Math. Phys.}, 10, no.~2-3 (1985), pp. 111--124.
\newblock {\sc doi:} \href {http://dx.doi.org/10.1007/BF00398147}
  {{10.1007/BF00398147}}.

\bibitem[LS90]{lascoux_tableaux}
A.~Lascoux \& M.-P. Sch{\"{u}}tzenberger.
\newblock `{T}ableaux and noncommutative {S}chubert polynomials'.
\newblock {\em Funct. Anal. Its. Appl.}, 23, no.~3 (1990), pp. 223--225.
\newblock {\sc doi:} \href {http://dx.doi.org/10.1007/BF01079531}
  {{10.1007/BF01079531}}.

\bibitem[Mac08]{macdonald_symmetric}
I.~Macdonald.
\newblock {\em {S}ymmetric {F}unctions and {H}all {P}olynomials}.
\newblock Clarendon Press, Oxford University Press, 2008.

\bibitem[Mal53]{malcev_nilpotent}
A.~Malcev.
\newblock `{N}ilpotent semigroups'.
\newblock {\em Ivanov. Gos. Ped. Inst., Uchenye Zap., Fiz.-Mat. Nauki}, 4
  (1953), pp. 107--111.

\bibitem[Nov00]{novelli_hypoplactic}
J.-C. Novelli.
\newblock `{O}n the hypoplactic monoid'.
\newblock {\em Discrete Mathematics}, 217, no.~1--3 (2000), pp. 315--336.
\newblock {\sc doi:} \href {http://dx.doi.org/10.1016/S0012-365X(99)00270-8}
  {{10.1016/S0012-365X(99)00270-8}}.

\bibitem[NT63]{neumann_nilpotent}
B.~Neumann \& T.~Taylor.
\newblock `{S}ubsemigroups of {N}ilpotent {G}roups'.
\newblock {\em Proc. Roy. Soc. Ser. A}, 274 (1963), pp. 1--4.

\bibitem[Pri13]{priez_lattice}
J.-B. Priez.
\newblock `{A} lattice of combinatorial {H}opf algebras: {B}inary trees with
  multiplicities'.
\newblock In {\em {F}ormal {P}ower {S}eries and {A}lgebraic {C}ombinatorics},
  Nancy, 2013. The Association. Discrete Mathematics {\&} Theoretical Computer
  Science.
\newblock {\sc url:}
  \href{http://www.dmtcs.org/pdfpapers/dmAS0196.pdf}{\nolinkurl{http://www.dmtcs.org/pdfpapers/dmAS0196.pdf}}.

\bibitem[Rey07]{rey_algebraic}
M.~Rey.
\newblock `{A}lgebraic constructions on set partitions'.
\newblock In {\em {F}ormal {P}ower {S}eries and {A}lgebraic {C}ombinatorics},
  2007.
\newblock {\sc url:}
  \href{http://www-igm.univ-mlv.fr/~rey/articles/rey-fpsac07.pdf}{\nolinkurl{http://www-igm.univ-mlv.fr/~rey/articles/rey-fpsac07.pdf}}.

\bibitem[Shn93]{shneerson_identities}
L.~Shneerson.
\newblock `{I}dentities in finitely generated semigroups of polynomial growth'.
\newblock {\em J. Algebra}, 154, no.~1 (1993), pp. 67--85.
\newblock {\sc doi:} \href {http://dx.doi.org/10.1006/jabr.1993.1004}
  {{10.1006/jabr.1993.1004}}.

\bibitem[TY11]{thomas_longest}
H.~Thomas \& A.~Yong.
\newblock `Longest increasing subsequences, {P}lancherel-type measure and the
  {H}ecke insertion algorithm'.
\newblock {\em Advances in Applied Mathematics}, 46, no.~1 (2011), pp.
  610--642.
\newblock {\sc doi:} \href {http://dx.doi.org/10.1016/j.aam.2009.07.005}
  {{10.1016/j.aam.2009.07.005}}.

\end{thebibliography}
\bibliographystyle{alphaabbrv}

\end{document}